\documentclass{article}

\usepackage{microtype}
\usepackage{graphicx}
\usepackage{subcaption}
\usepackage{booktabs} 
\usepackage{multirow}
\usepackage{amsmath}
\usepackage{tabularx}
\usepackage{bbm}
\usepackage[bb=boondox]{mathalfa}
\usepackage{makecell}
\usepackage{hyperref}
\usepackage{fancyhdr}
\usepackage{algorithm}

\usepackage[accepted]{icml2026}

\usepackage{amsmath}
\usepackage{amssymb}
\usepackage{mathtools}
\usepackage{amsthm}
\usepackage{multirow}
\usepackage{bm}
\usepackage{adjustbox}

\usepackage{algorithm}

\usepackage{microtype}
\usepackage[capitalize,noabbrev]{cleveref}

\theoremstyle{plain}
\newtheorem{theorem}{Theorem}[section]

\theoremstyle{definition}

\newtheorem{assumption}[theorem]{Assumption}
\theoremstyle{remark}

\usepackage[textsize=tiny]{todonotes}

\begin{document}
\icmltitlerunning{Bayesian Interpolating Neural Network (B-INN)}

\twocolumn[
\icmltitle{Bayesian Interpolating Neural Network (B-INN): a scalable and reliable Bayesian model for large-scale physical systems}





\icmlsetsymbol{equal}{*}

\begin{icmlauthorlist}
\icmlauthor{Chanwook Park}{equal,comp,hidenn}
\icmlauthor{Brian Kim}{equal,puremath,appmath}
\icmlauthor{Jiachen Guo}{hidenn,yyy}
\icmlauthor{Wing Kam Liu}{comp,hidenn}
\end{icmlauthorlist}

\icmlaffiliation{comp}{Department of Mechanical Engineering, Northwestern University, Evanston, Illinois, USA}
\icmlaffiliation{appmath}{Department of Engineering Sciences and Applied Mathematics, Northwestern University, Evanston, Illinois, USA}
\icmlaffiliation{puremath}{Department of Mathematics, Northwestern University, Evanston, Illinois, USA}
\icmlaffiliation{yyy}{Theoretical and Applied Mechanics Program, Northwestern University, Evanston, Illinois, USA}
\icmlaffiliation{hidenn}{HIDENN-AI, Evanston, Illinois, USA}

\icmlcorrespondingauthor{Wing Kam Liu}{w-liu@northwestern.edu}


\icmlkeywords{Machine Learning, ICML}

\vskip 0.3in
]



\printAffiliationsAndNotice{\icmlEqualContribution}

\begin{abstract}
Neural networks and machine learning models for uncertainty quantification suffer from limited scalability and poor reliability compared to their deterministic counterparts. 
In industry-scale active learning settings, where generating a single high-fidelity simulation may require days or weeks of computation and produce data volumes on the order of gigabytes, they quickly become impractical. This paper proposes a scalable and reliable Bayesian surrogate model, termed the Bayesian Interpolating Neural Network (B-INN). The B-INN combines high-order interpolation theory with tensor decomposition and alternating direction algorithm to enable effective dimensionality reduction without compromising predictive accuracy. We theoretically show that the function space of a B-INN is a subset of that of Gaussian processes, while its Bayesian inference exhibits linear complexity, $\mathcal{O}(N)$, with respect to the number of training samples. Numerical experiments demonstrate that B-INNs can be from 20 times to 10,000 times faster with a robust uncertainty estimation compared to Bayesian neural networks and Gaussian processes. These capabilities make B-INN a practical foundation for uncertainty-driven active learning in large-scale industrial simulations, where computational efficiency and robust uncertainty calibration are paramount.
\end{abstract}

\section{Introduction}

Uncertainty quantification is an essential requirement in surrogate modeling for modern engineering and scientific systems, where uncertainties arise from material variability, model inadequacy, and limited or noisy data. Statistical surrogate models are therefore critical for applications such as design optimization, inverse problems, and digital twins. In addition, active learning frameworks for large-scale numerical simulations rely on predictive uncertainty to guide adaptive sampling and reduce computational cost. These needs motivate the development of surrogate models that are both statistically rigorous and computationally efficient.

Gaussian processes are among the most widely used statistical surrogate models due to their nonparametric formulation and closed-form Bayesian inference \cite{mackay2003information}. They provide principled predictions of both mean and uncertainty and have been successfully applied in many engineering problems. However, Gaussian processes suffer from severe scalability limitations, particularly for large datasets and high-dimensional input spaces \cite{liu2020gaussian}. These challenges significantly limit their applicability to large-scale full-field surrogate modeling for physical simulations.

Recently, the Interpolating Neural Network (INN) was introduced as a scalable, reliable, and interpretable surrogate modeling framework that unifies interpolation theory, tensor decomposition, and neural networks \cite{park2025unifying, guo2025interpolating}. INNs construct approximation spaces using interpolation functions with strong numerical properties, leading to high accuracy and computational efficiency. By exploiting tensor decomposition, INNs achieve favorable scaling with respect to input dimension and have demonstrated superior performance over conventional neural networks in deterministic surrogate modeling of physical systems. However, the original INN formulation does not explicitly provide statistical inference or uncertainty quantification.

In this work, we propose Bayesian INN (B-INN) as a scalable statistical surrogate modeling framework that augments INNs with Bayesian inference. By borrowing ideas from classical Bayesian linear regression \cite{smith1973general}, Bayesian inference is performed in the INN coefficient space, producing analytical expressions for predictive uncertainty. We first prove the equivalence between B-INN and Gaussian processes in low-dimensional settings, both theoretically and numerically. We then extend the framework to high-dimensional problems using tensor decomposition and an alternating direction algorithm, enabling scalable uncertainty-aware surrogate modeling without sacrificing predictive accuracy. Compared to Gaussian processes that have the complexity of $\mathcal{O}(N^3)$ where $N$ is the data size, B-INN complexity is $\mathcal{O}(N)$ with proportional speed up with the number of parallel processors (i.e., $P$ parallel processors can accelerate by $P$ times). 

\section{Related work}

\subsection{Scalable Gaussian processes (GPs)}

Scalable Gaussian process (GP) methods have been broadly categorized into global and local approximation strategies to alleviate the cubic computational complexity of standard GPs with respect to the data size \cite{liu2020gaussian}. Global approximations aim to distill information from the entire dataset by constructing reduced representations of the kernel matrix \cite{chalupka2013framework} or low-rank inducing-point approximations \cite{quinonero2005unifying}, thus reducing the effective problem size and lowering the training complexity from $\mathcal{O}(N^3)$ to $\mathcal{O}(NM^2)$ where $N$ and $M$ are the size of the original data and inducing-point (i.e. subset) data, respectively. In contrast, local approximations follow a divide-and-conquer philosophy, partitioning data into smaller subsets and training multiple local GP experts whose predictions are later combined via mixture-of-experts formulations \cite{gramacy2008bayesian, gramacy2016lagp}. While these approaches significantly improve the scalability of GP, both classes rely on approximations that may discard a substantial portion of the original data dependencies. Therefore, the trade-off between computational efficiency and model fidelity remains a central challenge in scalable GP research.

\subsection{Bayesian neural networks (BNNs)}

Bayesian neural networks (BNNs) treat the network parameters as random variables and infer their posterior distribution from the data, allowing predictive uncertainty to be quantified alongside mean predictions. The fact that BNNs enable epistemic (model) uncertainty to be distinguished from aleatoric (data) uncertainty has made BNNs particularly appealing for active learning in surrogate modeling of physical simulations \cite{jospin2022hands}. Bayesian inference in these models is commonly carried out using either Markov Chain Monte Carlo (MCMC) methods \cite{bardenet2017markov} or variational inference \cite{blei2017variational}. MCMC provides asymptotically exact posterior samples, but its computational cost becomes prohibitive for high-dimensional problems with large neural networks and datasets. Variational inference addresses this limitation by recasting posterior inference as an optimization problem over a restricted family of distributions. However, it comes at the cost of approximation bias that may underpredict complex or multimodal posteriors. This poses a significant challenge in active learning scenarios where sampling decisions depend critically on reliable measures of model uncertainty.

\subsection{Diffusion models for uncertainty quantification}

Recent studies have demonstrated that denoising diffusion probabilistic models have been successfully applied to surrogate modeling of solid mechanics \cite{jadhav2023stressd}, thermal \cite{ogoke2024inexpensive, shi2025diffusion}, and fluid systems \cite{liu2024uncertainty}, where they generate full field predictions together with uncertainty estimates. However, the practical adoption of diffusion-based surrogates for large-scale active learning remains questionable. Training diffusion models typically requires a large number of high-fidelity simulations, which is incompatible with many engineering scenarios where each simulation is computationally expensive. Moreover, from the uncertainty produced by diffusion models, it is highly challenging to disentangle epistemic and aleatoric uncertainties. Active learning, by contrast, critically depends on well-calibrated epistemic uncertainty to guide adaptive sampling and requires frequent and fast retraining of surrogate models as new data are acquired. The high computing cost and training instability of diffusion models make such iterative retraining impractical, thereby limiting their effectiveness as a core engine for active learning in large-scale surrogate modeling.

\section{Bayesian Interpolating Neural Network (B-INN)}

This chapter will elucidate how the B-INN is formulated from an INN, and prove the equivalence between B-INNs and Gaussian processes (GPs).

\subsection{Review of INN}

An INN is a network represented as a summation of the product of the grid-wise interpolation functions and the corresponding interpolation values \cite{park2025unifying}. The baseline code of INN can be found from: \href{https://github.com/hachanook/pyinn}{https://github.com/hachanook/pyinn}. In one-dimension (1D), for example, assuming the input domain is discretized with $J$ grid points, it reads:
\begin{equation}
    y(x) = \sum_{j=1}^J \phi_j(x) w_j,
    \label{eq:INN_1D}
\end{equation}
where $\phi_j(x)$ and $w_j$ are the interpolation function and value at grid point $j$, respectively. There are several ways to construct the interpolation functions. One can choose piecewise linear (i.e., hat) functions if the underlying physics is highly localized, compactly supported nonlinear functions such as \cite{park2023convolution} to capture local nonlinearity, or even global functions such as Gaussian bases if there is a long-range dependence or low-frequency response.
\begin{figure}[h]
\centering
\includegraphics[width=1.0\linewidth]{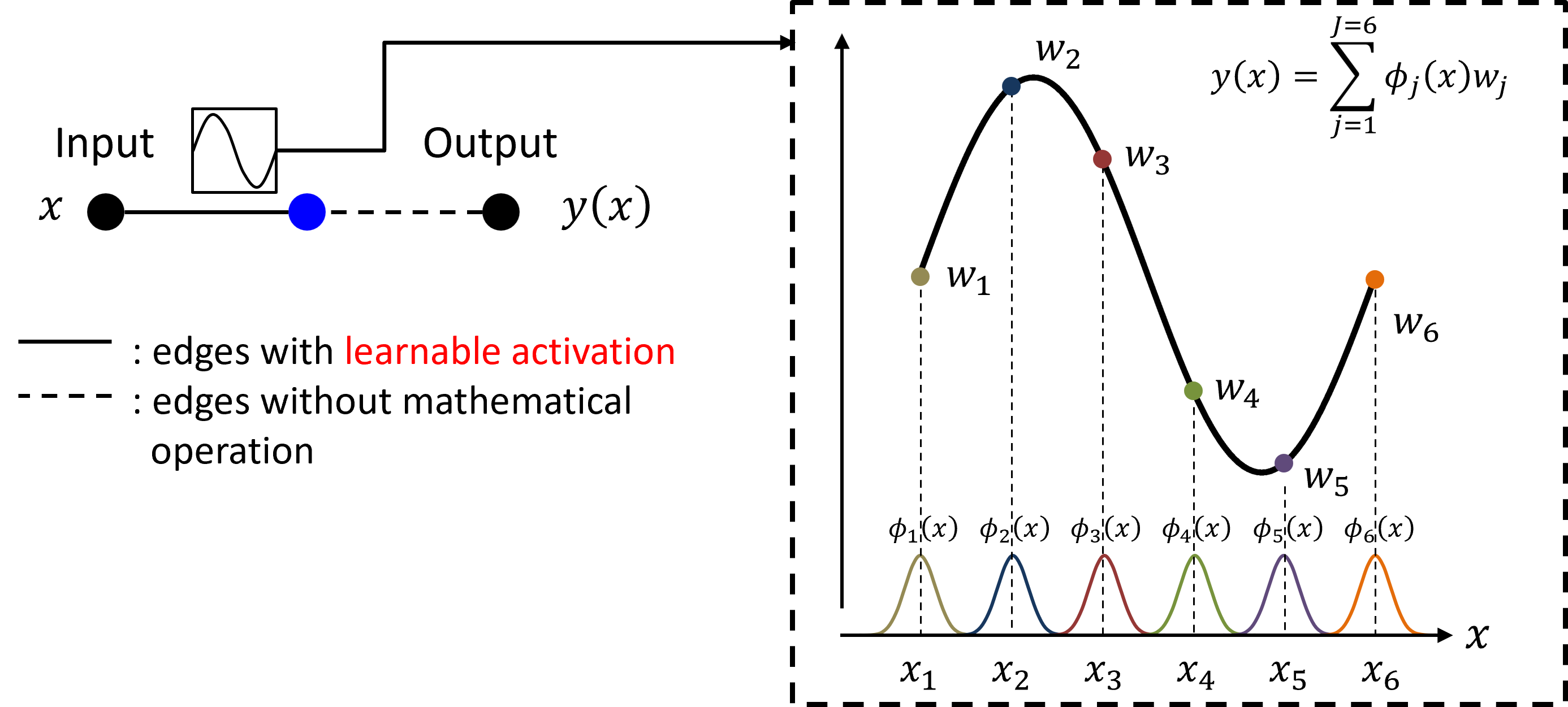}
\caption{INN architecture for a one-input and one-output system where the 1D input space is discretized with $J=6$ grid points. Gaussian functions are used for $\phi_{j}(x)$ \cite{park2025interpolating}. }
\label{fig:INN_1D}
\end{figure}

Figure \ref{fig:INN_1D} illustrates a simple one-input $(x)$ one-output $(y)$ INN architecture approximating a sine function. Between the input and output layers, there is only one hidden layer with one neuron, where the solid edge connecting from the input to the hidden neuron represents a learnable activation function parameterized with 6 interpolation values, $\{w_{j}\}_{j=1,\dots,6}$. These are the only trainable parameters of the INN, assuming that the 6 interpolation functions - $\{\phi_{j}(x)\}_{j=1,\dots,6}$ - that connect the interpolation values are fixed. Notice that although the original INN theory is open to adaptive interpolation functions \cite{park2025unifying}, we assume that they are fixed in this paper for simplicity.


The cost of constructing interpolation functions in high-dimensional space scales exponentially as the dimension grows. An INN in a high-dimensional input space thus adopts tensor decomposition (TD) theories such as Tucker decomposition \cite{tucker1963implications, tucker1966some} and CANDECOMP/PARAFAC (CP) decomposition \cite{harshman1970foundations, carroll1970analysis, kiers2000towards} to convert the high-dimensional interpolation into multiple 1D interpolations \cite{park2025unifying}. In this work, we adopt the CP decomposition that reads:
\begin{equation}
\begin{aligned}
     y &= \sum_{m=1}^M \prod_{d=1}^Df_d^{(m)}(x_d),\\
     &\text{where}\quad f_d^{(m)}(x_d) =\sum_{j=1}^{J_d} \phi_{d,j}(x_d) w_{d,j}^{(m)}
\end{aligned}
    \label{eq:INN_ND}
\end{equation}

Here, $M$ is the TD mode, $D$ is the input dimension, $x_d$ is the input variable of dimension $d$. The 1D function $f_d^{(m)}(x_d)$ now has dimensional index $d$ and mode index $m$ (compare this with Eq. \ref{eq:INN_1D}).

The multi-dimensional INN structure is described in Figure \ref{fig:INN_ND}. The forward pass consists of three steps: 1) interpolate each 1D input using the $f_d^{(m)}(x_d)$ function, 2) multiply the results of 1D interpolations at each mode, and 3) sum over $M$ modes in the output layer. 

\begin{figure}[h]
\centering
\includegraphics[width=1.0\linewidth]{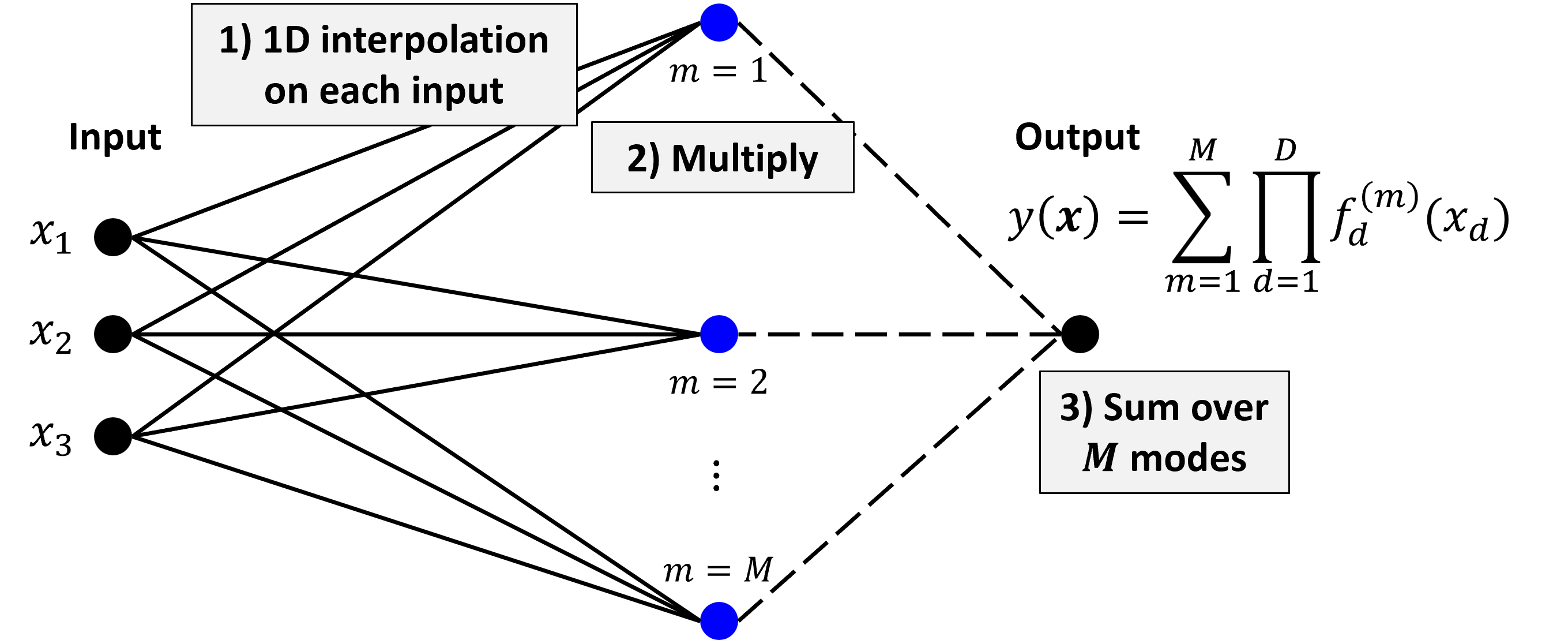}
\caption{INN architecture for a three-input and one-output system \cite{park2025interpolating}}
\label{fig:INN_ND}
\end{figure}

One may notice that the TD modes correspond to the hidden neurons in the network diagram, and the INN with CP TD only requires one hidden layer. This makes a critical distinction from multi-layer perceptrons (MLPs) and Kornogorov-Arnold Networks (KANs) \cite{liu2024kan}, which is summarized in Table \ref{tab:mlp-kan-inn}. Theoretically, all MLP, KAN, and INN have the so-called \textit{universal approximation} capabilities, although the TD theory achieves it in a discrete sense. That is, the CP TD can approximate an arbitrary tensor with a tiny error tolerance as the number of modes continues to increase \cite{kolda2009tensor}. However, because an INN requires only a single hidden layer, the number of trainable parameters scales linearly with the number of neurons, $M$, whereas both MLP and KAN scale quadratically. This makes an INN scalable. In addition, similar to KANs, INNs are interpretable, since we can engineer the interpolation functions to better capture the underlying physics. Readers may refer to \cite{park2025unifying} to understand how they are adapted during training.









\begin{table}[t]
\centering
\caption{Comparison across MLP, KAN, and INN architectures.}
\label{tab:mlp-kan-inn}
\renewcommand{\arraystretch}{1.25}
\resizebox{\columnwidth}{!}{%
\begin{tabular}{lccc}
\toprule
\textbf{Model} & \textbf{MLP} & \textbf{KAN} & \textbf{INN} \\
\midrule

\textbf{Theorem}
& \makecell[c]{Universal Approximation\\Theorem}
& \makecell[c]{Kolmogorov--Arnold\\Representation Theorem}
& \makecell[c]{Tensor Decomposition} \\
\midrule

\textbf{Network}
& Deep network
& Deep network
& Shallow network \\

\textbf{Depth}
& is preferred
& is needed
& is enough \\
\midrule

\textbf{Formula}
& \makecell[c]{$\displaystyle f_H \circ \cdots \circ f_1(\bm{x})$}
& \makecell[c]{$\displaystyle
\sum_{m=1}^{M} \Phi_m\!\left( \sum_{d=1}^{D} f_{m,d}(x_d) \right)$}
& \makecell[c]{$\displaystyle
\sum_{m=1}^{M} \prod_{d=1}^{D} f_d^{(m)}(x_d)$} \\
\midrule

\textbf{Trainable}
& $\mathcal{O}(H M^2)$
& $\mathcal{O}(H M^2 (J + k))$
& $MDJL$ \\

\textbf{Parameters}
& \multicolumn{3}{c}{
\makecell[c]{
$H$: hidden layers,\quad
$M$: hidden neurons,\quad
$D$: inputs,\quad
$L$: output \\
$k$: spline order of KAN,\quad
$J$: grid points of KAN and INN
}} \\
\bottomrule
\end{tabular}%
}
\end{table}


\subsection{Bayesian linear regression}

The Bayesian extension of an INN is simply done by viewing the INN trainable parameters (or interpolation values $w_{d,j}^{(m)}$) as random variables, following ideas similar to those introduced in seminal works in stochastic finite element methods \cite{liu1986random, liu1986probabilistic}. In the 1D case, this is mathematically identical to Bayesian linear regression (BLR) with nonlinear basis functions, $\phi_j(x)$. Thus, we review the BLR before proceeding to the B-INN in high-dimensional space.

BLR provides a probabilistic extension of classical linear regression by treating the weights as random variables. Classical linear regression has the form
\begin{equation}
    y = \boldsymbol{\phi}(\boldsymbol{x})^\top \boldsymbol{w} + \varepsilon,
    \label{eq:blr_model}
\end{equation}
where $\boldsymbol{\phi}(\boldsymbol{x}) \in \mathbb{R}^J$ is a vector of basis functions, $J$ denotes the number of basis functions, $\boldsymbol{w} \in \mathbb{R}^J$ is the weight vector, and $\varepsilon \sim \mathcal{N}(0,\sigma_n^2)$ represents independent Gaussian observation noise.

Given a dataset $\mathcal{D} = \{(\boldsymbol{x}^{(i)}, y^{(i)})\}_{i=1}^N$, where $N$ denotes the number of data points and $\boldsymbol{x}^{(i)} \in \mathbb{R}^D$ is the $i$-th input, we define the design matrix $\boldsymbol{X} \in \mathbb{R}^{N \times J}$, whose $i$-th row is the feature vector $\boldsymbol{\phi}(\boldsymbol{x}^{(i)})^\top$. In standard BLR, the design matrix coincides with the basis matrix
\begin{equation}
    \boldsymbol{\Phi}
    =
    \begin{bmatrix}
    \boldsymbol{\phi}(\boldsymbol{x}^{(1)})^\top \\
    \boldsymbol{\phi}(\boldsymbol{x}^{(2)})^\top \\
    \vdots \\
    \boldsymbol{\phi}(\boldsymbol{x}^{(N)})^\top
    \end{bmatrix}
    \in \mathbb{R}^{N \times J},
    \label{eq:design_matrix}
\end{equation}
and the target vector is $\boldsymbol{y} = (y^{(1)}, \ldots, y^{(N)})^\top\in\mathbb{R}^N$.

A Gaussian prior is placed on the weights,
\begin{equation}
    \boldsymbol{w} \sim \mathcal{N}(\boldsymbol{\mu}_w, \boldsymbol{\Sigma}_w),
    \label{eq:blr_prior}
\end{equation}
where $\boldsymbol{\mu}_w$ is the prior mean and $\boldsymbol{\Sigma}_w$ is the prior covariance matrix. Under the Gaussian likelihood induced by Eq.\eqref{eq:blr_model}, the posterior distribution of $\boldsymbol{w}$ remains Gaussian,
\begin{equation}
    \mathbb{P}(\boldsymbol{w} \mid \mathcal{D})
    =
    \mathcal{N}(\boldsymbol{\mu}_w^{\text{post}}, \boldsymbol{\Sigma}_w^{\text{post}}),
    \label{eq:blr_posterior}
\end{equation}
with posterior covariance and mean given by
\begin{equation}
    \label{eq:blr_posterior_eq}
    \begin{split}
        \boldsymbol{\Sigma}_w^{\text{post}}
        &=
        \left(
        \sigma_n^{-2} \boldsymbol{X}^\top \boldsymbol{X}
        +
        \boldsymbol{\Sigma}_w^{-1}
        \right)^{-1}, \\
        \boldsymbol{\mu}_w^{\text{post}}
        &=
        \boldsymbol{\Sigma}_w^{\text{post}}
        \left(
        \boldsymbol{\Sigma}_w^{-1} \boldsymbol{\mu}_w
        +
        \sigma_n^{-2} \boldsymbol{X}^\top \boldsymbol{y}
        \right).
    \end{split}
\end{equation}

Given a new input $\boldsymbol{x}^* \in \mathbb{R}^D$, the predictive distribution of the output $y^*$ is obtained by marginalizing over the posterior uncertainty in $\boldsymbol{w}$,
\begin{align}
    \mathbb{P}(y^* \mid \boldsymbol{x}^*, \mathcal{D})
    &= \mathcal{N}\!\left(\mu_*(\boldsymbol{x}^*),\, \sigma_*^2(\boldsymbol{x}^*)\right), \\
    \mu_*(\boldsymbol{x}^*)
    &= \boldsymbol{\phi}(\boldsymbol{x}^*)^\top \boldsymbol{\mu}_w^{\text{post}}, \\
    \sigma_*^2(\boldsymbol{x}^*)
    &= \sigma_n^2
    + \boldsymbol{\phi}(\boldsymbol{x}^*)^\top
    \boldsymbol{\Sigma}_w^{\text{post}}
    \boldsymbol{\phi}(\boldsymbol{x}^*).
\end{align}

The predictive uncertainty decomposes into an epistemic component arising from uncertainty in the model weights and an aleatoric component due to observation noise. This closed-form Bayesian inference framework forms the probabilistic backbone for B-INN.

\subsection{Bayesian inference of INNs}
B-INN follows the tensor-decomposition structure of the INN in Eq.\eqref{eq:INN_ND}. In each dimension, 
the one-dimensional input $x_d$ is expanded in the $J$-dimensional Gaussian basis space $\{\phi_{d,j}^{(m)}(x_d)\}_{j=1}^J$ and the weights are treated as random variables. Bayesian inference of the B-INN inherits ideas from the alternating least squares (ALS) formulation for efficiency and
scalability. Unlike classical ALS, which trains tensors on a discrete domain, the B-INN is trained on a continuous input domain by reformulating each ALS block update as a Bayesian linear regression
problem.

Let $D\in\mathbb{N}$ be the input dimension, and $M \in \mathbb{N}$ the number of modes. Given training inputs
$\{\boldsymbol{x}^{(i)}\}_{i=1}^N \subset \mathbb{R}^D$ with $\boldsymbol{x}^{(i)}=(x^{(i)}_1,\dots,x^{(i)}_D)$, let
$u:\mathbb{R}^D\to\mathbb{R}$ denote the target function and define the target vector
\[
    \mathbf{u}
    :=
    \begin{bmatrix}
        u(\boldsymbol{x}^{(1)})\\
        \vdots\\
        u(\boldsymbol{x}^{(N)})
    \end{bmatrix}
    \in\mathbb{R}^N.
\]
For each dimension $d\in\{1,\dots,D\}$, fix $J_d\in\mathbb{N}$ Gaussian basis functions with
centers $\{c_{d,j}\}_{j=1}^{J_d}\subset\mathbb{R}$ and length scale $l_d>0$. Define
\begin{equation}
    \label{eq:rbf_basis_scalar}
    \phi_{d,j}(x_d)
    := \exp\!\left(-\frac{(x_d-c_{d,j})^2}{2l_d^2}\right),
    \qquad j=1,\dots,J_d,
\end{equation}
For each training input $x_d^{(i)},$ define the basis evaluation vector
\begin{equation}
    \label{eq:basis_evaluation_vector}
    \Phi_d^{(i)}
:=
\begin{bmatrix}
\phi_{d,1}(x_d^{(i)})\\
\vdots\\
\phi_{d,J_d}(x_d^{(i)})
\end{bmatrix}
\in\mathbb{R}^{J_d}.
\end{equation}

For each mode $m\in\{1,\dots,M\}$ and dimension $d\in\{1,\dots,D\}$, the weights are distributed with a Gaussian prior:
\begin{equation}
    \label{eq:prior_wdm_generalD}
    w_d^{(m)} \sim \mathcal{N}\!\big(\mu_d^{(m)},\,\sigma_w^2 I_{J_d}\big)
\end{equation}
where $\mu_d^{(m)}\in\mathbb{R}^{J_d}$ and $\sigma_w^2>0$. Let
\begin{equation}
    \label{eq:weight_vector}
    w_d
    :=
    \begin{bmatrix}
        w_d^{(1)}\\
        \vdots\\
        w_d^{(M)}
    \end{bmatrix}
    \in\mathbb{R}^{M J_d},
    \qquad
    \mu_d
    :=
    \begin{bmatrix}
        \mu_d^{(1)}\\
        \vdots\\
        \mu_d^{(M)}
    \end{bmatrix}
    \in\mathbb{R}^{M J_d}.
\end{equation}

To update the weights in dimension $d$, we freeze the contributions in all other dimensions
$\ell\neq d$ at fixed values. Specifically,
for each $\ell\neq d$ and mode $m$, set
\[
    \widetilde w_\ell^{(m)}
    =
    \begin{cases}
        \mu_\ell^{(m)}, & \text{(initialization)},\\
        \widetilde m_\ell^{(m)}, & \text{(most recent posterior mean)}.
    \end{cases}
\]
Define the corresponding frozen factors from fixed dimensions
\[
    \widetilde f_\ell^{(m)}(x_\ell^{(i)})
    :=
    (\Phi_\ell^{(i)})^\top \widetilde w_\ell^{(m)},
    \qquad \ell\neq d
\]
For each training input $\boldsymbol{x}^{(i)}$, we define the contributions from all dimensions $\ell \neq d$ by

\begin{align}
    &g_d^{(m)}(x_{-d}^{(i)})
    :=
    \prod_{\substack{\ell=1\\ \ell\neq d}}^{D} \widetilde f_\ell^{(m)}(x_\ell^{(i)}), \\
    &g_d^{(i)}
    :=
    \begin{bmatrix}
        g_d^{(1)}(x_{-d}^{(i)})\\
        \vdots\\
        g_d^{(M)}(x_{-d}^{(i)})
    \end{bmatrix}
    \in\mathbb{R}^M .
\end{align}

Then for the $i$-th training input,
\begin{align}
    y(x^{(i)})
    &=
    \sum_{m=1}^M
    \Big((\Phi_d^{(i)})^\top w_d^{(m)}\Big)\,g_d^{(m)}(x_{-d}^{(i)}) \\
    &=
    \big(g_d^{(i)\top}\otimes (\Phi_d^{(i)})^\top\big)\,w_d.
\end{align}
Therefore, the design matrix for the $d$-th block update is
\begin{equation}
    \label{eq:Xd_kronecker_row}
    X_d
    :=
    \begin{bmatrix}
        g_d^{(1)\top}\otimes (\Phi_d^{(1)})^\top\\
        \vdots\\
        g_d^{(N)\top}\otimes (\Phi_d^{(N)})^\top
    \end{bmatrix}
    \in\mathbb{R}^{N\times (M J_d)}.
\end{equation}

With Gaussian observation noise, we model
\[
    \mathbf{u} = X_d\,w_d + \varepsilon,
    \qquad
    \varepsilon\sim\mathcal{N}(0,\sigma_n^2 I_N),
\]
where $\sigma_n^2>0$ is the noise variance. Conditional on the frozen contributions from the
other dimensions, this is exactly a Bayesian linear regression problem.

Combining the Gaussian likelihood with the prior (Eq.\eqref{eq:prior_wdm_generalD}), the posterior
$p(w_d\mid X_d,\mathbf{u})$ is Gaussian,
\[
    w_d \mid X_d,\mathbf{u} \sim \mathcal{N}\!\left(\widetilde m_d,\;\Sigma_{d}^{\text{post}}\right),
\]
where the posterior covariance $\Sigma_{d}^{\text{post}}$ and posterior mean $\widetilde m_d$ are given by
Eq.\eqref{eq:blr_posterior_eq} (with $\boldsymbol{X}=X_d$ and $\boldsymbol{y}=\mathbf{u}$).

The posterior mean $\widetilde m_d\in\mathbb{R}^{M J_d}$ can be reshaped into mode-wise means
$\{\widetilde m_d^{(m)}\}_{m=1}^M$ with $\widetilde m_d^{(m)}\in\mathbb{R}^{J_d}$, which we take
as the updated weights for dimension $d$.

We update each dimension $d=1,\dots,D$ in turn by rebuilding $X_d$ from $\{g_d^{(i)}\}_{i=1}^N$ and setting $\widetilde w_d \leftarrow \widetilde m_d$ via
Eq.\eqref{eq:blr_posterior_eq}. Repeating these alternating Bayesian linear regression updates
for a fixed number of iterations yields the Bayesian inference procedure for B-INN.

\subsection{Equivalence between GP and B-INN}
\label{subsec:Equivalence_between_GP_and_B-INN}

We can observe that if $D=1$, the B-INN function in Eq.\eqref{eq:INN_ND} collapses to Bayesian linear regression.
From Eq.\eqref{eq:weight_vector}, define
\[
 \mathbf{w} := \sum_{m=1}^M w_1^{(m)}, \quad \mathbf{w}\in\mathbb{R}^{J_1}.
\]
We retain the tensor decomposition mode number $M$ for generality, although it is typically set to $M=1$ for 1D problems.

Using Eq.\eqref{eq:basis_evaluation_vector} and $\mathbf{w}$, we can rewrite the 1D B-INN function
\begin{equation}
y(x_1) = \sum_{m=1}^M \sum_{j=1}^{J_1}\phi_{1,j}(x_1)\,w_{1,j}^{(m)} = \Phi_1(x_1)^\top \mathbf{w}.    
\end{equation}
Notice that the dimension index is added as a subscript to the input: $x_1$. Then $y(x_1)$ is linear in the weights $\mathbf{w}$, and with a Gaussian prior on $\mathbf{w}$ and Gaussian observation noise, the $D=1$ case reduces to a Bayesian linear regression model.

\begin{assumption}[Independent Gaussian weights]\label{assump:weights_main}
For every $d\in\{1,\dots,D\}$, $j\in\{1,\dots,J_d\}$, and mode $m\in\{1,\dots,M\}$,
\[
w_{d,j}^{(m)} \stackrel{\mathrm{i.i.d.}}{\sim} \mathcal{N}(0,\sigma_w^2),
\]
independent across all indices $(d,j,m)$.
\end{assumption}

\begin{theorem}[As $M\to\infty$, the normalized B-INN prior converges to a GP prior in finite-dimensional distributions]
\label{thm:prior_gp_main}
Assume Assumption~\ref{assump:weights_main}. 
Let $y^{(M)}$ denote the B-INN output $y$ in Eq.~\eqref{eq:INN_ND} when the number of modes is $M$, and define the normalization process
\begin{equation}\label{eq:binn_normalized}
\bar y^{(M)}(x):=\frac{1}{\sqrt{M}}\,y^{(M)}(x).
\end{equation}
Then, as $M\to\infty$, $\bar y^{(M)}$ converges in finite-dimensional distributions to a centered Gaussian process
\[
g \sim \text{GP}(0,k),
\]
with kernel
\[
k(x,x')
=
\prod_{d=1}^D
\left(
\sigma_w^2\sum_{j=1}^{J_d}\phi_{d,j}(x_d)\phi_{d,j}(x_d')
\right),
\]
where $\phi_{d.j}$ is the standard RBF basis function.
\end{theorem}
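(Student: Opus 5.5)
The plan is to write $\bar y^{(M)}(x)=M^{-1/2}\sum_{m=1}^M Z_m(x)$ with $Z_m(x):=\prod_{d=1}^D f_d^{(m)}(x_d)$. Under Assumption~\ref{assump:weights_main}, the weight collections $\{w_{d,j}^{(m)}\}_{d,j}$ for different modes $m$ are independent and identically distributed. Hence the random functions $Z_1,Z_2,\dots$ are i.i.d. copies of a single random function $Z$. Convergence of finite-dimensional distributions then reduces to a multivariate central limit theorem. Fix points $x^{1},\dots,x^{K}\in\mathbb{R}^D$ and consider the random vectors $V_m:=(Z_m(x^{1}),\dots,Z_m(x^{K}))\in\mathbb{R}^K$. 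The $V_m$ are i.i.d., and $(\bar y^{(M)}(x^{1}),\dots,\bar y^{(M)}(x^{K}))=M^{-1/2}\sum_m V_m$. By the multivariate (Lindeberg--L\'evy) CLT it therefore suffices to check three things: $\mathbb{E}V_1=0$, $\mathbb{E}\|V_1\|^2<\infty$, and $\mathrm{Cov}(V_1)_{ab}=k(x^{a},x^{b})$.

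For the moments, note first that for each fixed mode the one-dimensional factors $f_d(x_d)=\Phi_d(x_d)^\top w_d$ for different $d$ are independent. This holds because they depend on disjoint sets of independent weights. Each factor is centered Gaussian, with $\mathbb{E}[f_d(x_d)f_d(x_d')]=\sigma_w^2\sum_{j}\phi_{d,j}(x_d)\phi_{d,j}(x_d')=\sigma_w^2\,\Phi_d(x_d)^\top\Phi_d(x_d')$, using $\mathbb{E}[w_dw_d^\top]=\sigma_w^2 I_{J_d}$. Independence across $d$ lets expectations of products factor:
\[
\mathbb{E}Z(x)=\prod_{d}\mathbb{E}f_d(x_d)=0,\qquad
\mathbb{E}[Z(x)Z(x')]=\prod_{d=1}^D\mathbb{E}[f_d(x_d)f_d(x_d')]=k(x,x').
\]
This gives exactly the stated product kernel. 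Finiteness of second moments is immediate, since $\mathbb{E}Z(x)^2=\prod_d\sigma_w^2\|\Phi_d(x_d)\|^2<\infty$; the Gaussian basis functions are bounded by $1$. No higher-moment condition is needed for the i.i.d. CLT.

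Applying the multivariate CLT (equivalently, Cram\'er--Wold plus the scalar CLT applied to $\sum_a t_a Z_m(x^{a})$) gives convergence in distribution of the vector to $\mathcal{N}(0,K)$ with $K_{ab}=k(x^{a},x^{b})$. To conclude, we must identify these limits with the finite-dimensional distributions of a centered GP with kernel $k$. This needs $k$ to be a valid (symmetric, positive semidefinite) kernel, which follows because it arises as a covariance. Alternatively, each factor is a Gram kernel and products of PSD kernels are PSD by Schur's theorem. The limiting family is then automatically consistent, so Kolmogorov's extension theorem gives the process $g\sim\mathrm{GP}(0,k)$.

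The main point needing care is the independence structure. The argument must be organized so that independence across modes yields the i.i.d. sum, and independence across dimensions within a mode yields the factorization of covariances. Note that $Z$ itself is a product of Gaussians and is not Gaussian, which is why the $M^{-1/2}$ normalization and the CLT are genuinely needed.
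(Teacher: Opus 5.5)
Your proposal is correct and follows essentially the same route as the paper: at finitely many inputs, write $\bar y^{(M)}$ as $M^{-1/2}$ times a sum of i.i.d.\ centered mode vectors with finite second moments, and apply the multivariate CLT. The covariance is then computed by factoring over the independent dimensions, and positive semidefiniteness of $k$ follows from its representation as a covariance; your Schur-product alternative and the explicit Kolmogorov-extension step are harmless refinements of the same argument.
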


\begin{theorem}[As $M\to\infty$, the normalized B-INN posterior at finitely many test points converges weakly to the GP regression posterior]
\label{thm:postpred_gp} 
Assume ~\ref{assump:weights_main}. Let $X=\{x^{(i)}\}_{i=1}^n\subset\mathbb{R}^D$ be training inputs and let
$X_*=\{x_*^{(j)}\}_{j=1}^{n_*}\subset\mathbb{R}^D$ be test inputs. Fix observed data $D:=(X,\tilde y)$ with $\tilde y\in\mathbb{R}^n$.
For each $M$, define the posterior of $\bar y^{(M)}$ using the Gaussian likelihood
\[
\tilde y \mid \bar y^{(M)} \sim \mathcal{N}\!\big(\bar y^{(M)}(X),\,\sigma_n^2 I_n\big),
\qquad \sigma_n^2>0 \text{ fixed.}
\]
Let $g\sim\text{GP}(0,k)$ be the Gaussian-process limit from Theorem~\ref{thm:prior_gp_main}.
Then, as $M\to\infty$,
\[
\bar y^{(M)}(X_*) \mid D \;\Rightarrow\; g(X_*) \mid D
\quad\text{in distribution on }\mathbb{R}^{n_*}.
\]
Moreover, the limiting conditional distribution $g(X_*)\mid D$ is exactly the standard Gaussian-process regression posterior distribution at $X_*$
associated with the prior $g\sim\text{GP}(0,k)$ and noise variance $\sigma_n^2$ \cite{rasmussen2006gpml}.

\end{theorem}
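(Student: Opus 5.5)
The plan is to reduce the posterior statement to weak convergence of the joint prior at the finitely many points $X\cup X_*$, and then push that convergence through the Bayesian update. By Theorem~\ref{thm:prior_gp_main}, the random vector $Z_M:=(\bar y^{(M)}(X),\bar y^{(M)}(X_*))\in\mathbb{R}^{n+n_*}$ converges in distribution to $Z:=(g(X),g(X_*))\sim\mathcal{N}(0,K)$. Here $K$ is the kernel matrix of $k$ on $X\cup X_*$. Since the likelihood depends only on $\bar y^{(M)}(X)$, the posterior of $\bar y^{(M)}(X_*)$ given $D$ is the law $\nu_M$ on $\mathbb{R}^{n_*}$ defined for bounded continuous test functions $h$ by
\begin{equation*}
\int h\,d\nu_M=\frac{\mathbb{E}\big[h(\bar y^{(M)}(X_*))\,L(\bar y^{(M)}(X))\big]}{\mathbb{E}\big[L(\bar y^{(M)}(X))\big]},
\qquad L(f):=\exp\!\Big(-\tfrac{1}{2\sigma_n^2}\|\tilde y-f\|^2\Big).
\end{equation*}

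The key observation is that $L$ is bounded (by $1$), continuous and strictly positive on $\mathbb{R}^n$, because $\sigma_n^2>0$ is fixed. Hence the map $(a,b)\mapsto h(b)L(a)$ is bounded and continuous on $\mathbb{R}^{n+n_*}$. Weak convergence $Z_M\Rightarrow Z$ then gives convergence of both the numerator and the denominator to their counterparts with $Z$ in place of $Z_M$. The limiting denominator $\mathbb{E}[L(g(X))]$ is strictly positive, since $L>0$ everywhere. So the ratio converges to $\mathbb{E}[h(g(X_*))L(g(X))]/\mathbb{E}[L(g(X))]$ for every bounded continuous $h$. By the portmanteau theorem this is exactly $\bar y^{(M)}(X_*)\mid D\Rightarrow g(X_*)\mid D$.

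For the ``moreover'' part, I would identify the limiting conditional law explicitly. Under the limit, $(g(X),g(X_*))$ is jointly Gaussian with covariance $K$, and the observation model is $\tilde y=g(X)+\varepsilon$ with $\varepsilon\sim\mathcal{N}(0,\sigma_n^2I_n)$ independent of $g$. Thus $(\tilde y,g(X_*))$ is jointly Gaussian with covariance blocks $K_{XX}+\sigma_n^2I$, $K_{*X}$ and $K_{**}$. The Bayes formula above, applied with the Gaussian prior, coincides with ordinary Gaussian conditioning. The standard Gaussian conditioning lemma then yields mean $K_{*X}(K_{XX}+\sigma_n^2I)^{-1}\tilde y$ and covariance $K_{**}-K_{*X}(K_{XX}+\sigma_n^2I)^{-1}K_{X*}$. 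This is the GP regression posterior of \cite{rasmussen2006gpml}, and the matrix $K_{XX}+\sigma_n^2I$ is invertible because $\sigma_n^2>0$.

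The main obstacle is justifying that finite-dimensional convergence from Theorem~\ref{thm:prior_gp_main} applies jointly at $X\cup X_*$. This holds because the theorem asserts convergence of all finite-dimensional distributions, including at repeated points. One must also check that the posterior is a well-defined probability measure for each finite $M$, which again follows from $0<\mathbb{E}[L]\le 1$. No uniform integrability issues arise, since $L$ is bounded.
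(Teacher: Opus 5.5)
Your proposal is correct and follows essentially the same route as the paper's proof. Both arguments use joint weak convergence of the prior at $X\cup X_*$ from Theorem~\ref{thm:prior_gp_main}, then pass the numerator and denominator of the Bayes ratio to the limit using that the Gaussian likelihood is bounded, continuous and strictly positive, and finally identify the GP regression posterior by Gaussian conditioning. The differences are cosmetic: you test directly with $h(b)L(a)$ rather than proving posterior convergence on all of $T$ and then projecting via the continuous mapping theorem, and you write out the explicit posterior mean and covariance.
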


The detailed proofs for these theorems can be found in Appendix \ref{appendix:proof}.

\subsection{Predictive mean and variance of B-INN}
Under the assumption \ref{assump:weights_main}, the predictive mean and variance can be derived as follows.

Let
\[
\boldsymbol{\Phi}_d :=
\begin{bmatrix}
\Phi_d^{(1)}\\
\vdots\\
\Phi_d^{(N)}
\end{bmatrix}
\in\mathbb{R}^{N \times J_d}.
\]

For a test input \(\boldsymbol{x}^+=(x_1^+,\dots,x_D^+)\),
\begin{align}
\mathbb{E}\!\left[y\right]
&=
\sum_{m=1}^{M}\prod_{d=1}^{D}
\boldsymbol{\Phi}_d(x_d^+)^T\,\widetilde{\boldsymbol{m}}_d^{(m)} .
\end{align}

Using the same independence assumption,
{\small
\begin{equation}
\label{eq:binn_pred_var}
\begin{aligned}
\mathrm{Var}\!\left[y\right]
= \sum_{m=1}^{M}
\Bigg(
&\prod_{d=1}^{D}\Big(
\mathbb{E}\!\left[f_d^{(m)}(x_d^+)\right]^2
- \mathrm{Var}\!\left[f_d^{(m)}(x_d^+)\right]
\Big) \\
&\;-\;
\prod_{d=1}^{D}\mathbb{E}\!\left[f_d^{(m)}(x_d^+)\right]^2
\Bigg).
\end{aligned}
\end{equation}
}

Finally, for each \(d\) and \(m\), we use the standard results from the Bayesian linear regression:
\[
\mathbb{E}\!\left[f_d^{(m)}(x_d^+)\right]
=
\Phi_d(x_d^+)^T\,\widetilde{m}_d^{(m)}
=
\sum_{j=1}^{J_d}\phi_{d,j}(x_d^+)\,\widetilde{m}_{d,j}^{(m)},
\]
\[
\begin{aligned}
\text{and} \quad\mathrm{Var}\!\left[f_d^{(m)}(x_d^+)\right]
&=
\Phi_d(x_d^+)^T\,V_d^{(m)} \Phi_d(x_d^+).
\end{aligned}
\]

\section{Results}
The numerical experiments consist of two parts. We first start with regression benchmarks with uncertainty quantification and compare B-INNs with Gaussian processes (GPs) and Bayesian neural networks (BNNs). Then we extend our discussion to active learning and demonstrate that B-INNs are significantly faster and more reliable than BNNs for high-dimensional physical surrogate modeling problems.

\subsection{Regression with uncertainty quantification}
\label{subsec:regression_with_UQ}

\subsubsection{GP, BNN, and B-INN in 1D input space}

As discussed in Section \ref{subsec:Equivalence_between_GP_and_B-INN}, the function space of B-INN is a subset of that of GP, while they become identical for an infinite number of TD modes (for multi-dimension). To validate this, let us consider a synthetic 1D function: $y = \sin(3x) + 0.3\cos(9x) + \epsilon$, where $\epsilon \sim \mathcal{N}(0, 0.04)$. From this function, 60 training and 200 test data points are randomly generated in $x \in [-1, 1]$.

\begin{figure}[h]
\centering
\includegraphics[width=1.0\linewidth]{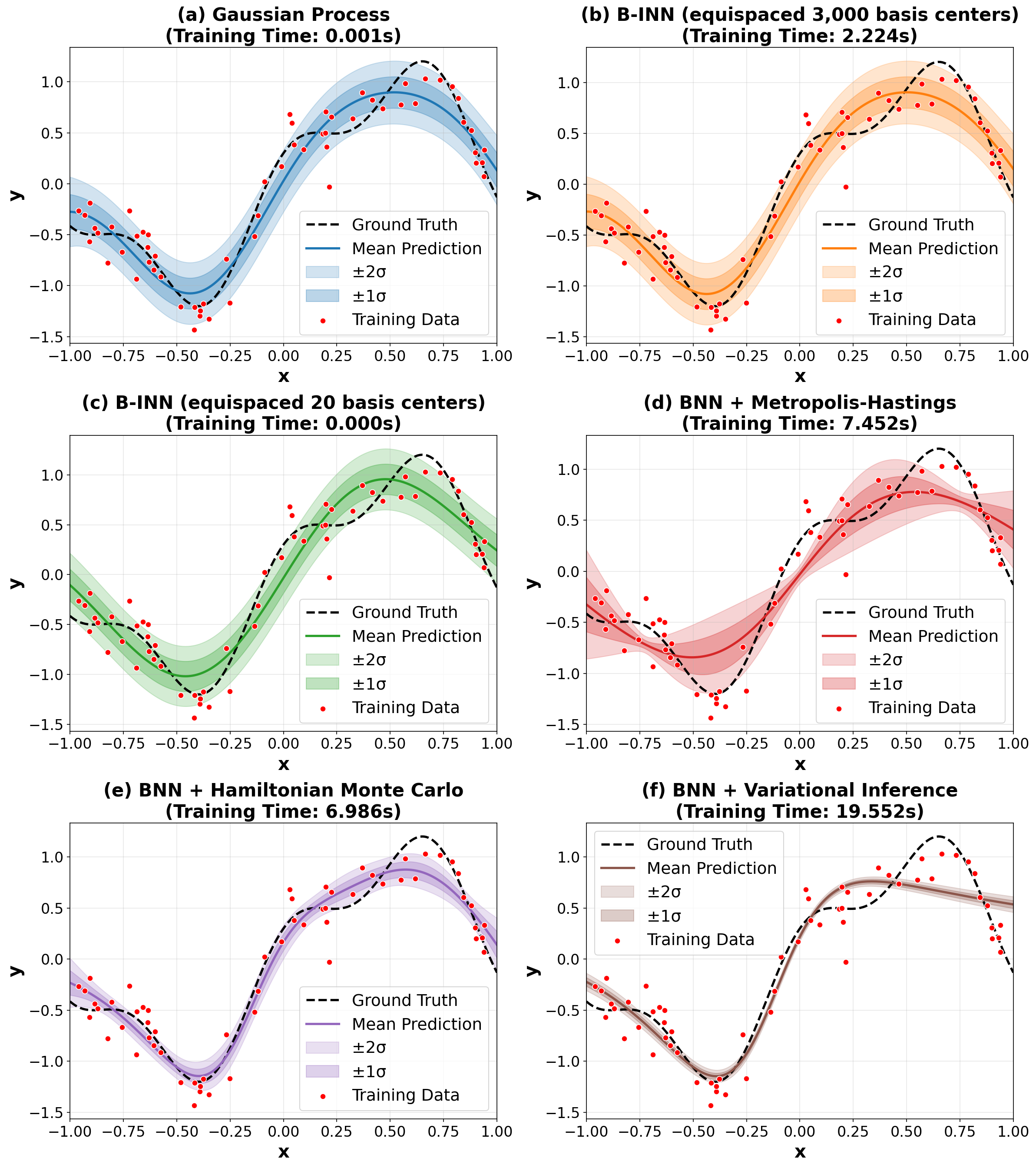}
\caption{Regression results of GP, B-INNs, and BNNs. Model hyperparameters can be found in Appendix \ref{appendix:model_hyperparameters}.}
\label{fig:1D_wo_gap_predictions_comparison}
\end{figure}

Figure \ref{fig:1D_wo_gap_predictions_comparison} compares 6 models: (a) Gaussian process, (b) B-INN with equispaced 3,000 basis functions, (c) B-INN with equispaced 20 basis functions, (d) BNN with Metropolis Hastings (MH), (e) BNN with Hamiltonian Monte Carlo (HMC), and (f) BNN with Variational Inference (VI). The first two results are nearly the same, while the maximum absolute difference of test RMSE mean and standard deviation among the GP (a) and the B-INN (b) are $1.78\times10^{-2}$ and $1.14\times10^{-3}$, respectively. In Figure \ref{fig:1D_wo_gap_predictions_comparison} (c), having a much smaller number of basis functions $(J=20 \ll 3,000)$ does not deteriorate predictive accuracy and uncertainty measure, although there is a dramatic reduction in operation count of matrix inversion: from $\mathcal{O}(3,000^3)$ of (b) to $\mathcal{O}(60^3)$ of (a) to $\mathcal{O}(20^3)$ of (c). Therefore, a B-INN with $J<N$ becomes substantially more efficient than the GP when the data set is huge.

Compared to the GP (a) and B-INNs (b, c), Monte Carlo-based BNNs (d, e) and the BNN with VI (f) perform poorly in terms of predictive accuracy, uncertainty estimation, and training time. Furthermore, the (d, e) require storing multiple samples of training parameters, which becomes prohibitive when there are billions of neural network parameters and thousands of samples. 

We conducted a scaling analysis of the same problem by progressively increasing the number of training data points $(N)$ from 160 to 1.3 million (see Figure \ref{fig:1D_wo_gap_data_scaling}(a)). When all other hyperparameters are fixed (see Appendix \ref{appendix:model_hyperparameters} for details), the training time of the GP scales cubically with $N$, whereas all other models exhibit linear scaling. We also observe that the B-INN is approximately $10^2$ times faster than the fastest BNN (with MH) and $10^4$ times faster than the slowest BNN (with HMC). Even with 1.3 million data points, the B-INN training took only one second. 

\begin{figure}[h]
\centering
\includegraphics[width=0.98\linewidth]{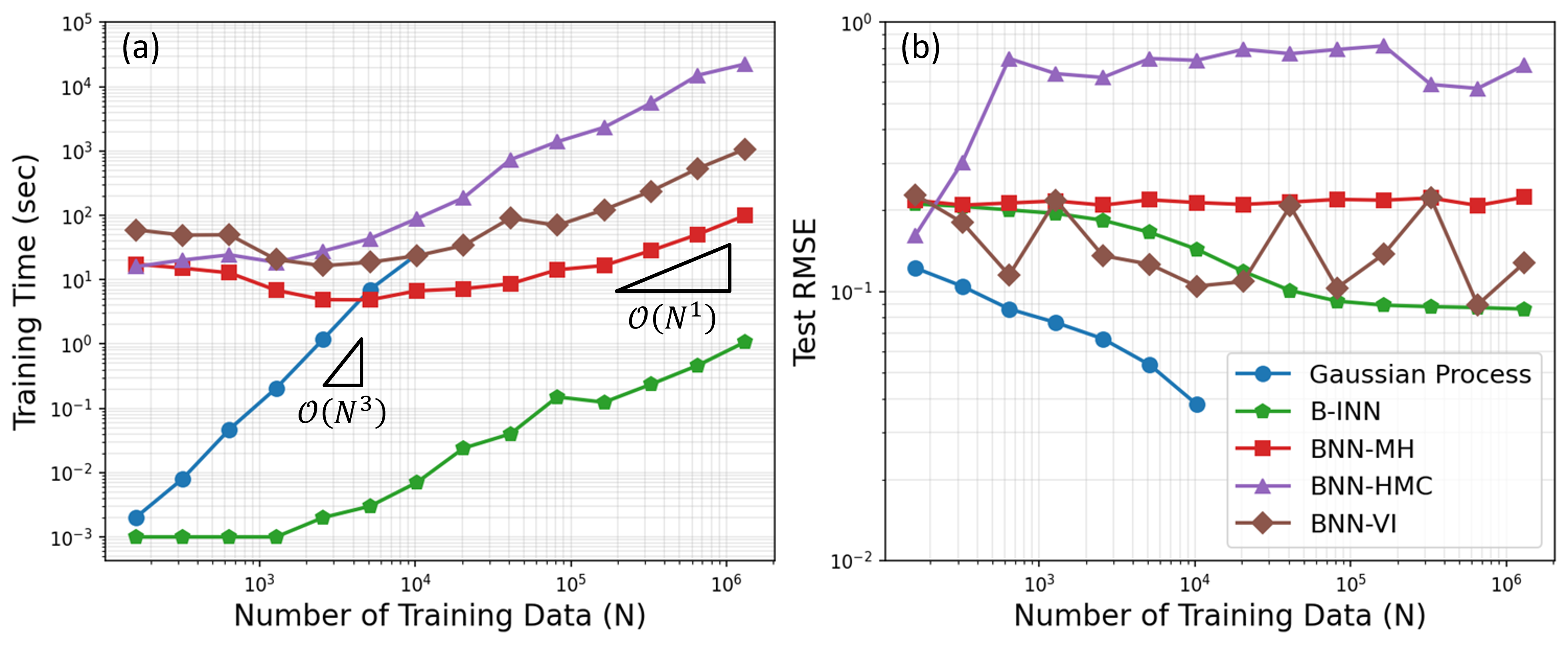}
\caption{Training time (a) and test RMSE (b) as the number of training samples increases.}
\label{fig:1D_wo_gap_data_scaling}
\end{figure}

The test root mean squared error (RMSE) is reported in Figure \ref{fig:1D_wo_gap_data_scaling}(b). While the GP error was the lowest, it cannot be trained on more than 10,000 data points on a desktop machine with 32GB of RAM. BNN predictions are generally less accurate and exhibit greater variability than those of the B-INN.
This 1D benchmark demonstrates why the B-INN can serve as a scalable and reliable alternative to GPs and BNNs.

\subsubsection{Learning open source aerodynamics dataset - BlendedNet}
\label{subsec:learn_Blendednet}
We then performed a similar analysis on a high-dimensional open-source aerodynamics data set called BlendedNet \cite{sung2025blendednet}. The original data size is roughly 40GB for the training set and 4GB for the test set when they are mounted on the CPU memory. Due to the scalability limitation of BNNs, we subsampled from the original dataset such that the train/test set sizes are roughly 80/8 MB (see Appendix \ref{appendix:blendednet}). In the current data, there are 7 inputs (three for $x,y,z$ coordinates on a plane surface and four flight conditions - altitude, Reynolds number, Mach number, and angle of attack) and 4 outputs (aerodynamic coefficients - $C_p, C_{f_x}, C_{f_y}, C_{f_z}$).

Table \ref{tab:blendednet_rmse_comparison} compares the predictive accuracy and training cost of B-INN against BNN-HMC and BNN-VI. Across all target quantities, B-INN consistently achieves the lowest or comparable training and test RMSE (see Figure \ref{fig:blendednet} for visual illustration), indicating superior predictive accuracy and generalization performance. Among these models, the BNN-VI shows the largest degradation in accuracy. From a computational standpoint, B-INN is significantly more efficient, requiring only about 20–30 seconds for training, whereas BNN-based models require approximately 550–600 seconds. Overall, B-INN is over 20 times faster than BNNs while delivering the most accurate predictions, underscoring its suitability for large-scale uncertainty-aware surrogate modeling and time-critical active learning workflows.

\begin{table}[h]
\centering
\caption{RMSE and training time comparison across models for BlendedNet Dataset}
\label{tab:blendednet_rmse_comparison}
\small
\renewcommand{\arraystretch}{1.15}
\resizebox{\columnwidth}{!}{%
\begin{tabular}{lcc|cc|cc}
\hline
\textbf{Metric}
& \multicolumn{2}{c|}{\textbf{B-INN}}
& \multicolumn{2}{c|}{\textbf{BNN-HMC}}
& \multicolumn{2}{c}{\textbf{BNN-VI}} \\
\cline{2-7}
& \textbf{Train} & \textbf{Time}
& \textbf{Train} & \textbf{Time}
& \textbf{Train} & \textbf{Time} \\
& \textbf{/Test} &  \textbf{(sec)}
& \textbf{/Test} &  \textbf{(sec)}
& \textbf{/Test} &  \textbf{(sec)} \\
\hline
\multirow{2}{*}{$C_p$}
& $3.32\times10^{-2}$ & \multirow{2}{*}{\textit{29.75}}
& $3.49\times10^{-2}$ & \multirow{2}{*}{\textit{577.78}}
& $4.83\times10^{-2}$ & \multirow{2}{*}{\textit{544.85}} \\
& $4.64\times10^{-2}$ &
& $4.73\times10^{-2}$ &
& $8.32\times10^{-2}$ & \\

\multirow{2}{*}{$C_{f_x}$}
& $2.74\times10^{-2}$ & \multirow{2}{*}{\textit{21.36}}
& $2.80\times10^{-2}$ & \multirow{2}{*}{\textit{574.04}}
& $3.80\times10^{-2}$ & \multirow{2}{*}{\textit{562.72}} \\
& $2.74\times10^{-2}$ &
& $3.02\times10^{-2}$ &
& $7.29\times10^{-2}$ & \\

\multirow{2}{*}{$C_{f_y}$}
& $1.87\times10^{-2}$ & \multirow{2}{*}{\textit{26.53}}
& $1.85\times10^{-2}$ & \multirow{2}{*}{\textit{577.58}}
& $3.22\times10^{-2}$ & \multirow{2}{*}{\textit{578.8}} \\
& $1.74\times10^{-2}$ &
& $1.94\times10^{-2}$ &
& $7.88\times10^{-2}$ & \\

\multirow{2}{*}{$C_{f_z}$}
& $2.79\times10^{-2}$ & \multirow{2}{*}{\textit{27.12}}
& $2.78\times10^{-2}$ & \multirow{2}{*}{\textit{575.15}}
& $3.85\times10^{-2}$ & \multirow{2}{*}{\textit{611.92}} \\
& $3.03\times10^{-2}$ &
& $3.15\times10^{-2}$ &
& $8.35\times10^{-2}$ & \\
\hline
\end{tabular}
}
\end{table}

\begin{figure}[h]
\centering
\includegraphics[width=0.98\linewidth]{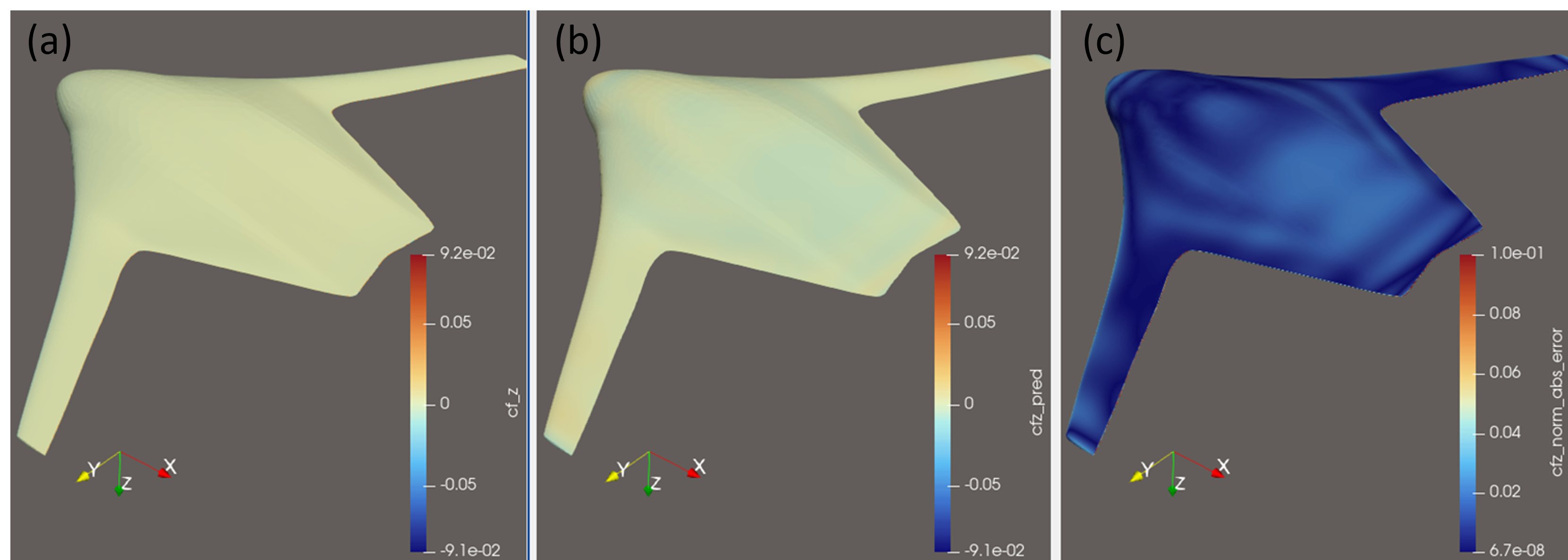}
\caption{Test data (a), B-INN prediction (b), and normalized absolute error (c) of force coefficient $(C_{f_z})$ of BlendedNet dataset. }
\label{fig:blendednet}
\end{figure}

\subsection{Active learning with B-INN}
\label{subsec:active_learning_with_B-INN}

Now, we explore how B-INNs can be used for active learning of high-dimensional physical problems. The active learning (AL) algorithm is provided in Appendix \ref{appendix:active_learning_algorithm}, where we referenced from \cite{mojumder2023linking}.

\subsubsection{Predicting a 3D space and 1D parameter Poisson equation solution with AL}
\label{subsec:3d_space_poisson_AL}


In this example, we aim to learn the solution from a parametric Poisson equation on the unit cube \(\Omega=[0,1]^3\) with homogeneous Dirichlet boundary conditions. For each parameter
\(p\in[0,1]\), we define \(u(\cdot;p)\) as the solution of
\begin{equation}
    \label{eq:poisson_fixed_pde}
    \begin{cases}
        -\Delta u(\boldsymbol{x};p) = f(\boldsymbol{x};p),
        & \boldsymbol{x}\in \Omega,\\[3pt]
        u(\boldsymbol{x};p)=0,
        & \boldsymbol{x}\in \partial\Omega,
    \end{cases}
\end{equation}
where \(\boldsymbol{x}=(x_1,x_2,x_3)\) and \(\Delta\) denotes the Laplacian with respect to \(\boldsymbol{x}\). Following the derivations provided in Appendix \ref{appendix:governing_equations_for_active_learning}, we obtain a fully analytic mapping \((\boldsymbol{x},p)\mapsto u(\boldsymbol{x};p)\), which we use to generate data.

We train and compare 3 different Models: B-INN, BNN-HMC, and BNN-VI. The results are summarized in Table~\ref{tab:rmse_poisson_comparison}, where the RMSE is reported. We conducted 20 active learning rounds. The candidate pool initially contained 100 parameter values, from which 8 were selected for the initial training set and 2 were held out as a fixed test set. For each spatial dimension, the grid contained 16 points. Additional details on the model configurations are provided in Appendix \ref{appendix:active_learning_and_model_hyperparameters}. For each model, we report the initial and final RMSE, and the best RMSE across all rounds. Overall, as also illustrated in Figure \ref{fig:Poisson_heat_RMSE}(a), B-INN achieves the lowest initial, minimum, and final RMSE, roughly three orders of magnitude lower than the best RMSE attained by BNN-HMC and BNN-VI. Unlike BNN-VI, which exhibits large fluctuations across rounds, B-INN shows consistent and monotonic error reduction, suggesting more stable learning dynamics under active acquisition.

\begin{table}[h]
\centering
\caption{Active learning RMSE comparison across models for 3D space 1D parameter Poisson equation}
\label{tab:rmse_poisson_comparison}
\small
\renewcommand{\arraystretch}{1.1}

\resizebox{\columnwidth}{!}{%
\begin{tabular}{lccc}
\hline
\textbf{Metric} & \textbf{B-INN} & \textbf{BNN-HMC} & \textbf{BNN-VI} \\
\hline
Init RMSE  & $2.55\times 10^{-5}$ & $1.45\times 10^{-3}$ & $1.39\times 10^{-2}$ \\
Best RMSE  & $2.12\times 10^{-6}$ & $3.75\times 10^{-4}$ & $1.04\times 10^{-3}$ \\
Final RMSE & $2.12\times 10^{-6}$ & $9.94\times 10^{-4}$ & $1.39\times 10^{-3}$ \\
\% Improvement    & 91.70\%             & 74.18\%              & 92.50\% \\

Total training time (sec)       & 261 & 8,233 & 19,321 \\
\hline
\end{tabular}
}
\end{table}

\begin{figure}[h]
\centering
\includegraphics[width=1.0\linewidth]{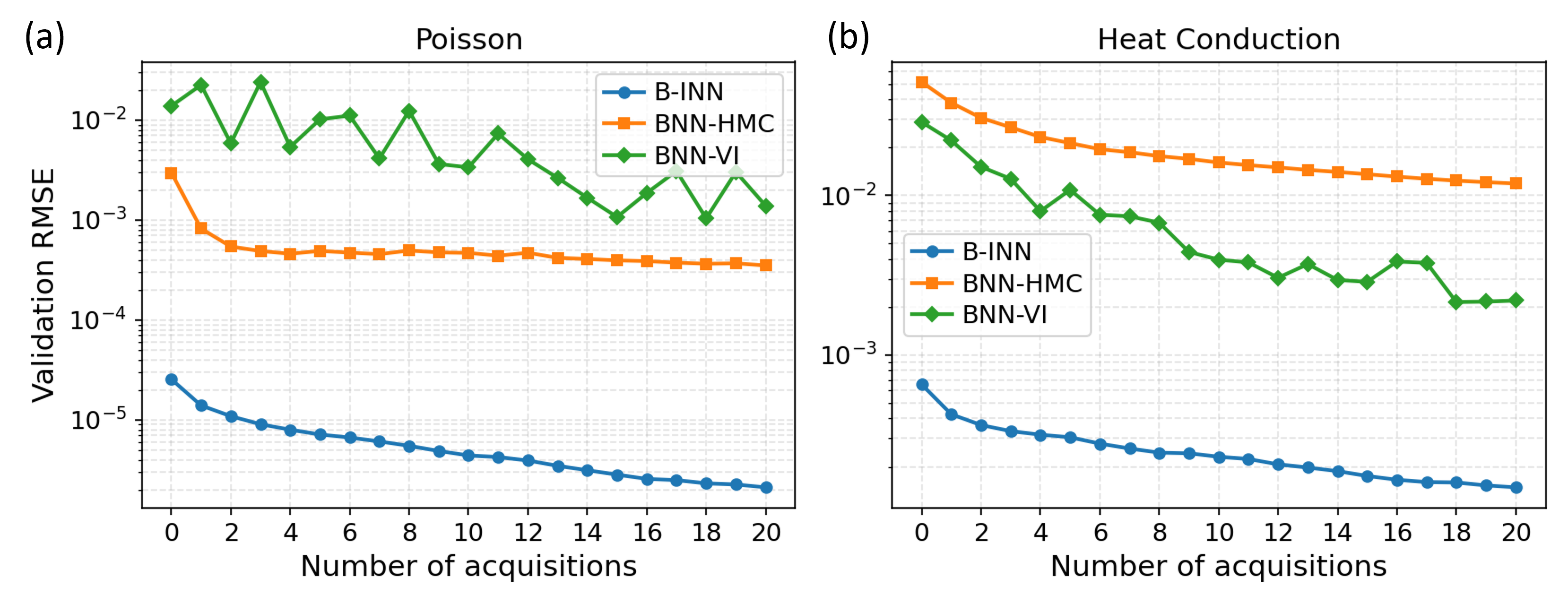}
\caption{Validation RMSE versus the number of active-learning acquisitions for the 3D spatial, 1D-parameter parametric Poisson problem (a) and 2D space, time, 2D parametric heat diffusion problem (b). Mode hyperparameters can be found in Appendix \ref{appendix:active_learning_and_model_hyperparameters}. }
\label{fig:Poisson_heat_RMSE}
\end{figure}

\subsubsection{Surrogate modeling of 2D space, time, and 2D parameter heat equation with AL}

We further increase the dimensionality of the problem. This time, we learn the solution to a time-dependent parametric heat conduction benchmark introduced in~\cite{guo2025interpolating}.
The goal is to learn the parametric solution operator mapping spatial location $(x,y)$, time $t$, thermal conductivity $k$,
and source power $P$ to the temperature field value $u$. The parametric PDE can be written as $(x,y,t,k,P)\mapsto u.$ The governing equation is provided in Appendix \ref{appendix:governing_equations_for_active_learning}. This benchmark problem is the largest in scale among those considered in this paper. After active learning, a total of 1.85 million data points were used for training.

Table \ref{tab:rmse_heat_comparison} and Figure \ref{fig:Poisson_heat_RMSE}(b) report RMSE over 20 active learning rounds. The spatial dimensions $(x,y)$ are discretized with 51 points each, time is discretized with 13 points, and each parameter $(k,P)$ is discretized with 11 points. This yields a candidate pool of $11\times 11=121$ parameter pairs. We use 35 $(k,P)$ pairs for the initial training set and hold out 10 $(k,P)$ pairs for validation. Details on the experimental setup and hyperparameters are provided in Appendix \ref{appendix:active_learning_and_model_hyperparameters}.

We can see that in Section \ref{subsec:3d_space_poisson_AL}, BNN-HMC outperforms BNN-VI on the smaller dataset. In contrast, for the larger-scale problem considered here, BNN-VI performs better, indicating that it scales more favorably with increasing data size. Nonetheless B-INNs consistently outperform both BNN baselines.
\begin{table}[h]
\centering
\caption{Active learning RMSE comparison across models for 2D space, time, 2D parameter heat diffusion equation}
\label{tab:rmse_heat_comparison}
\small
\renewcommand{\arraystretch}{1.1}

\resizebox{\columnwidth}{!}{%
\begin{tabular}{lccc}
\hline
\textbf{Metric} & \textbf{B-INN} & \textbf{BNN-HMC} & \textbf{BNN-VI} \\
\hline
Init RMSE  & $6.55\times 10^{-4}$ & $5.13\times 10^{-2}$ & $2.88\times 10^{-2}$ \\
Best RMSE  & $1.48\times 10^{-4}$ & $1.19\times 10^{-2}$ & $2.15\times 10^{-3}$ \\
Final RMSE & $1.48\times 10^{-4}$ & $1.19\times 10^{-2}$ & $2.19\times 10^{-3}$ \\
\% Improvement    & 77.37\%             & 74.18\%              & 92.50\% \\

Total training time (sec)       & 7,297 & 32,984 & 91,960 \\
\hline
\end{tabular}
}
\end{table}
\section{Conclusion}
We introduced a Bayesian interpolating neural network (B-INN) as a scalable and reliable alternative to Gaussian processes (GPs) and Bayesian neural networks (BNNs) for large-scale surrogate modeling with uncertainty quantification. Theoretically, we proved that the B-INN is a scalable and accurate subset of the GP. Across a range of numerical experiments, including high-dimensional active learning tasks, B-INN consistently outperformed GPs and BNNs, achieving speedups ranging from 20 to 10,000 times while providing comparable or improved predictive accuracy.

As with any early-stage methodology, several important research questions remain to be addressed. In particular, further investigation into the hyperparameter sensitivity of B-INN is warranted. For high-dimensional problems, we observe notable variations in training performance as key hyperparameters, such as the number of basis functions and characteristic length scales, are varied. This behavior is likely related to the alternating direction algorithm employed during Bayesian inference, which may introduce sensitivity to local optimization structure. One promising direction to mitigate this issue is to perform Bayesian inference directly in a global trainable parameter space, in a manner analogous to the BNN with variational inference.

Nevertheless, the theoretical analysis and numerical results presented in this paper demonstrate that B-INN provides a practical foundation for active learning in large-scale industrial simulations, enabling the efficient treatment of previously computationally intractable problems.
\section*{Impact Statement}

B-INN lowers the barrier to deploying principled uncertainty quantification and active learning in industry-grade physical surrogate modeling, including design optimization and digital twin development. More broadly, the proposed framework bridges classical statistical rigor and modern scalable learning, opening new opportunities for resource-efficient scientific AI.

\nocite{langley00}

\bibliography{example_paper}

@inproceedings{langley00,
 author    = {P. Langley},
 title     = {Crafting Papers on Machine Learning},
 year      = {2000},
 pages     = {1207--1216},
 editor    = {Pat Langley},
 booktitle     = {Proceedings of the 17th International Conference
              on Machine Learning (ICML 2000)},
 address   = {Stanford, CA},
 publisher = {Morgan Kaufmann}
}

@book{mackay2003information,
  title={Information theory, inference and learning algorithms},
  author={MacKay, David JC},
  year={2003},
  publisher={Cambridge university press}
}

@article{liu2020gaussian,
  title={When Gaussian process meets big data: A review of scalable GPs},
  author={Liu, Haitao and Ong, Yew-Soon and Shen, Xiaobo and Cai, Jianfei},
  journal={IEEE transactions on neural networks and learning systems},
  volume={31},
  number={11},
  pages={4405--4423},
  year={2020},
  publisher={IEEE}
}

@article{park2025unifying,
  title={Unifying machine learning and interpolation theory via interpolating neural networks},
  author={Park, Chanwook and Saha, Sourav and Guo, Jiachen and Zhang, Hantao and Xie, Xiaoyu and Bessa, Miguel A and Qian, Dong and Chen, Wei and Wanger, Gregory J and Cao, Jian and others},
  journal={Nature Communications},
  volume={16},
  number={1},
  pages={8753},
  year={2025},
  publisher={Nature Publishing Group UK London}
}

@InProceedings{guo2025interpolating,
  title = 	 {Interpolating Neural Network-Tensor Decomposition ({INN}-{TD}): a scalable and interpretable approach for large-scale physics-based problems},
  author =       {Guo, Jiachen and Xie, Xiaoyu and Park, Chanwook and Zhang, Hantao and Politis, Matthew J. and Domel, Gino and Liu, Wing Kam},
  booktitle = 	 {Proceedings of the 42nd International Conference on Machine Learning},
  pages = 	 {21138--21162},
  year = 	 {2025},
  volume = 	 {267},
  series = 	 {Proceedings of Machine Learning Research},
  month = 	 {13--19 Jul},
  publisher =    {PMLR},
  url = 	 {https://proceedings.mlr.press/v267/guo25p.html}
}

@article{smith1973general,
  title={A general Bayesian linear model},
  author={Smith, Adrian FM},
  journal={Journal of the Royal Statistical Society Series B: Statistical Methodology},
  volume={35},
  number={1},
  pages={67--75},
  year={1973},
  publisher={Oxford University Press}
}

@article{chalupka2013framework,
  title={A framework for evaluating approximation methods for Gaussian process regression},
  author={Chalupka, Krzysztof and Williams, Christopher KI and Murray, Iain},
  journal={The Journal of Machine Learning Research},
  volume={14},
  number={1},
  pages={333--350},
  year={2013},
  publisher={JMLR. org}
}

@article{quinonero2005unifying,
  title={A unifying view of sparse approximate Gaussian process regression},
  author={Quinonero-Candela, Joaquin and Rasmussen, Carl Edward},
  journal={Journal of machine learning research},
  volume={6},
  number={Dec},
  pages={1939--1959},
  year={2005}
}

@article{gramacy2008bayesian,
  title={Bayesian treed Gaussian process models with an application to computer modeling},
  author={Gramacy, Robert B and Lee, Herbert K H},
  journal={Journal of the American Statistical Association},
  volume={103},
  number={483},
  pages={1119--1130},
  year={2008},
  publisher={Taylor \& Francis}
}

@article{gramacy2016lagp,
  title={laGP: large-scale spatial modeling via local approximate Gaussian processes in R},
  author={Gramacy, Robert B},
  journal={Journal of Statistical Software},
  volume={72},
  pages={1--46},
  year={2016}
}

@article{jadhav2023stressd,
  title={StressD: 2D Stress estimation using denoising diffusion model},
  author={Jadhav, Yayati and Berthel, Joseph and Hu, Chunshan and Panat, Rahul and Beuth, Jack and Farimani, Amir Barati},
  journal={Computer Methods in Applied Mechanics and Engineering},
  volume={416},
  pages={116343},
  year={2023},
  publisher={Elsevier}
}

@article{ogoke2024inexpensive,
  title={Inexpensive high fidelity melt pool models in additive manufacturing using generative deep diffusion},
  author={Ogoke, Francis and Liu, Quanliang and Ajenifujah, Olabode and Myers, Alexander and Quirarte, Guadalupe and Malen, Jonathan and Beuth, Jack and Farimani, Amir Barati},
  journal={Materials \& Design},
  volume={245},
  pages={113181},
  year={2024},
  publisher={Elsevier}
}

@article{shi2025diffusion,
  title={Diffusion-based surrogate modeling and multi-fidelity calibration},
  author={Shi, Naichen and Yan, Hao and Guo, Shenghan and Al Kontar, Raed},
  journal={IEEE Transactions on Automation Science and Engineering},
  year={2025},
  publisher={IEEE}
}

@article{liu2024uncertainty,
  title={Uncertainty-aware surrogate models for airfoil flow simulations with denoising diffusion probabilistic models},
  author={Liu, Qiang and Thuerey, Nils},
  journal={AIAA Journal},
  volume={62},
  number={8},
  pages={2912--2933},
  year={2024},
  publisher={American Institute of Aeronautics and Astronautics}
}

@article{jospin2022hands,
  title={Hands-on Bayesian neural networks—A tutorial for deep learning users},
  author={Jospin, Laurent Valentin and Laga, Hamid and Boussaid, Farid and Buntine, Wray and Bennamoun, Mohammed},
  journal={IEEE Computational Intelligence Magazine},
  volume={17},
  number={2},
  pages={29--48},
  year={2022},
  publisher={IEEE}
}

@article{bardenet2017markov,
  title={On Markov chain Monte Carlo methods for tall data},
  author={Bardenet, R{\'e}mi and Doucet, Arnaud and Holmes, Chris},
  journal={Journal of Machine Learning Research},
  volume={18},
  number={47},
  pages={1--43},
  year={2017}
}

@article{blei2017variational,
  title={Variational inference: A review for statisticians},
  author={Blei, David M and Kucukelbir, Alp and McAuliffe, Jon D},
  journal={Journal of the American statistical Association},
  volume={112},
  number={518},
  pages={859--877},
  year={2017},
  publisher={Taylor \& Francis}
}

@article{park2023convolution,
  title={Convolution hierarchical deep-learning neural network (c-hidenn) with graphics processing unit (gpu) acceleration},
  author={Park, Chanwook and Lu, Ye and Saha, Sourav and Xue, Tianju and Guo, Jiachen and Mojumder, Satyajit and Apley, Daniel W and Wagner, Gregory J and Liu, Wing Kam},
  journal={Computational Mechanics},
  volume={72},
  number={2},
  pages={383--409},
  year={2023},
  publisher={Springer}
}

@article{tucker1966some,
  title={Some mathematical notes on three-mode factor analysis},
  author={Tucker, Ledyard R},
  journal={Psychometrika},
  volume={31},
  number={3},
  pages={279--311},
  year={1966},
  publisher={Springer}
}

@article{tucker1963implications,
  title={Implications of factor analysis of three-way matrices for measurement of change},
  author={Tucker, Ledyard R},
  journal={Problems in measuring change},
  volume={15},
  number={122-137},
  pages={3},
  year={1963},
  publisher={University of Wisconsin Press Madison}
}

@article{harshman1970foundations,
  title={Foundations of the PARAFAC procedure: Models and conditions for an “explanatory” multi-modal factor analysis},
  author={Harshman, Richard A and others},
  journal={UCLA working papers in phonetics},
  volume={16},
  number={1},
  pages={84},
  year={1970},
  publisher={Los Angeles, CA}
}

@article{carroll1970analysis,
  title={Analysis of individual differences in multidimensional scaling via an N-way generalization of “Eckart-Young” decomposition},
  author={Carroll, J Douglas and Chang, Jih-Jie},
  journal={Psychometrika},
  volume={35},
  number={3},
  pages={283--319},
  year={1970},
  publisher={Springer}
}

@article{kiers2000towards,
  title={Towards a standardized notation and terminology in multiway analysis},
  author={Kiers, Henk AL},
  journal={Journal of Chemometrics: A Journal of the Chemometrics Society},
  volume={14},
  number={3},
  pages={105--122},
  year={2000},
  publisher={Wiley Online Library}
}

@article{liu2024kan,
  title={Kan: Kolmogorov-arnold networks},
  author={Liu, Ziming and Wang, Yixuan and Vaidya, Sachin and Ruehle, Fabian and Halverson, James and Solja{\v{c}}i{\'c}, Marin and Hou, Thomas Y and Tegmark, Max},
  journal={arXiv preprint arXiv:2404.19756},
  year={2024}
}

@article{kolda2009tensor,
  title={Tensor decompositions and applications},
  author={Kolda, Tamara G and Bader, Brett W},
  journal={SIAM review},
  volume={51},
  number={3},
  pages={455--500},
  year={2009},
  publisher={SIAM}
}

@inproceedings{sung2025blendednet,
  title={Blendednet: A blended wing body aircraft dataset and surrogate model for aerodynamic predictions},
  author={Sung, Nicholas and Spreizer, Steven and Elrefaie, Mohamed and Samuel, Kaira and Jones, Matthew C and Ahmed, Faez},
  booktitle={International Design Engineering Technical Conferences and Computers and Information in Engineering Conference},
  volume={89237},
  pages={V03BT03A049},
  year={2025},
  organization={American Society of Mechanical Engineers}
}

@article{hron2020exact,
  title={Exact posterior distributions of wide Bayesian neural networks},
  author={Hron, Ji\v{r}\'{i} and Bahri, Yasaman and Novak, Roman and Pennington, Jeffrey and Sohl-Dickstein, Jascha},
  journal={arXiv preprint arXiv:2006.10541},
  year={2020}
}

@book{rasmussen2006gpml,
  title={Gaussian Processes for Machine Learning},
  author={Rasmussen, Carl E. and Williams, Christopher K. I.},
  publisher={MIT Press},
  year={2006}
}

@phdthesis{park2025interpolating,
  title={Interpolating Neural Networks for the Next-Generation Predictive Scientific Artificial Intelligence},
  author={Park, Chanwook},
  year={2025},
  school={Northwestern University}
}

@article{mojumder2023linking,
  title={Linking process parameters with lack-of-fusion porosity for laser powder bed fusion metal additive manufacturing},
  author={Mojumder, Satyajit and Gan, Zhengtao and Li, Yangfan and Al Amin, Abdullah and Liu, Wing Kam},
  journal={Additive Manufacturing},
  volume={68},
  pages={103500},
  year={2023},
  publisher={Elsevier}
}

@article{liu1986random,
  title={Random field finite elements},
  author={Liu, Wing Kam and Belytschko, Ted and Mani, A8616800597},
  journal={International journal for numerical methods in engineering},
  volume={23},
  number={10},
  pages={1831--1845},
  year={1986},
  publisher={Wiley Online Library}
}

@article{liu1986probabilistic,
  title={Probabilistic finite elements for nonlinear structural dynamics},
  author={Liu, Wing Kam and Belytschko, Ted and Mani, A},
  journal={Computer Methods in Applied Mechanics and Engineering},
  volume={56},
  number={1},
  pages={61--81},
  year={1986},
  publisher={Elsevier}
}
\bibliographystyle{icml2026}

\newpage
\appendix
\onecolumn

\section{Proofs for section \ref{subsec:Equivalence_between_GP_and_B-INN}}
\label{appendix:proof}

\subsection{Proof of Theorem \ref{thm:prior_gp_main}}

Fix inputs $x^{(1)},\dots,x^{(n)}\in\mathbb{R}^D$. For each mode $m$, define
\[
U^{(m)}(x):=\prod_{d=1}^D f_d^{(m)}(x_d),
\qquad
Z^{(m)}:=\big(U^{(m)}(x^{(1)}),\dots,U^{(m)}(x^{(n)})\big)\in\mathbb{R}^n.
\]
Then the evaluation vector of the normalized process can be written as 
\[
\big(\bar{y}^{(M)}(x^{(1)}),\dots,\bar{y}^{(M)}(x^{(n)})\big)
=
\frac{1}{\sqrt{M}}\sum_{m=1}^M Z^{(m)}.
\]
\medskip
\noindent\textbf{Step 1: i.i.d.\ structure and finite second moments.}
By our assumption, the weights are independent and identically distributed across modes,
so $\{Z^{(m)}\}_{m\ge1}$ are i.i.d.\ in $\mathbb{R}^n$.

Let $d\in\{1,\dots,D\}$ and $u\in\mathbb{R}$. Since
$
f_d^{(m)}(u)=\sum_{j=1}^{J_d} w_{d,j}^{(m)}\phi_{d,j}(u)
$
is a finite linear combination of independent centered Gaussians,
\[
\mathbb{E}[f_d^{(m)}(u)]=0,
\qquad
\mathbb{E}\!\big[f_d^{(m)}(u)^2\big]
=
\sum_{r=1}^{J_d}\mathbb{E}\!\big[(w_{d,r}^{(m)})^2\big]\phi_{d,r}(u)^2
=
\sigma_w^2\sum_{r=1}^{J_d}\phi_{d,r}(u)^2
<\infty.
\]
Using independence across dimensions $d$, for any $x\in\mathbb{R}^D$,
\[
\mathbb{E}[U^{(m)}(x)]
=\prod_{d=1}^D \mathbb{E}[f_d^{(m)}(x_d)]
=0,
\qquad
\mathbb{E}\!\big[U^{(m)}(x)^2\big]
=\prod_{d=1}^D \mathbb{E}\!\big[f_d^{(m)}(x_d)^2\big]
<\infty.
\]
Hence $\mathbb{E}[Z^{(m)}]=0$ and
\(
\mathbb{E}\|Z^{(m)}\|^2=\sum_{i=1}^n \mathbb{E}[U^{(m)}(x^{(i)})^2]<\infty.
\)

\medskip
\noindent\textbf{Step 2: Multivariate CLT.}
By Step~1, $\{Z^{(m)}\}_{m\ge1}$ are i.i.d.\ with mean $0$ and finite second moment, so
\[
\frac{1}{\sqrt{M}}\sum_{m=1}^M Z^{(m)}
\Rightarrow
\mathcal{N}(0,\Sigma),
\qquad
\Sigma=\mathrm{Cov}(Z^{(m)}).
\]
Equivalently, for $1\le i,j\le n$,
\[
\Sigma_{ij}=\mathbb{E}\!\big[Z^{(m)}_i Z^{(m)}_j\big].
\]

\medskip
\noindent\textbf{Step 3: Define and compute the limiting kernel.}
Define the covariance kernel of the normalized process $\bar{y}^{(M)}$ by
\[
k(x,x'):=\mathbb{E}\!\big[\bar{y}^{(M)}(x)\bar{y}^{(M)}(x')\big],
\qquad x,x'\in\mathbb{R}^D.
\]

Then

\begin{align}
k(x,x')
&=\mathbb{E}\!\left[\left(\frac{1}{\sqrt{M}}\sum_{m=1}^M U^{(m)}(x)\right)
                \left(\frac{1}{\sqrt{M}}\sum_{\ell=1}^M U^{(\ell)}(x')\right)\right] \\
    &=\frac{1}{M}\sum_{m=1}^M\sum_{\ell=1}^M \mathbb{E}\!\big[U^{(m)}(x)\,U^{(\ell)}(x')\big].
\end{align}
For $m\neq \ell$, independence across distinct modes and $\mathbb{E}[U^{(m)}(x)]=0$ imply
\[
\mathbb{E}\!\big[U^{(m)}(x)\,U^{(\ell)}(x')\big]
=\mathbb{E}[U^{(m)}(x)]\,\mathbb{E}[U^{(\ell)}(x')]
=0,
\]
so all cross terms vanish and therefore
\begin{align}
k(x,x')
&=\frac{1}{M}\sum_{m=1}^M \mathbb{E}\!\big[U^{(m)}(x)\,U^{(m)}(x')\big]
=\mathbb{E}\!\big[U^{(m)}(x)\,U^{(m)}(x')\big].
\end{align}
by independence across dimensions $d,$ expanding $U^{(m)}(x)$ gives
\begin{align}
k(x,x')
&=\mathbb{E}\!\left[\prod_{d=1}^D f_d^{(m)}(x_d)\,f_d^{(m)}(x_d')\right]
=\prod_{d=1}^D \mathbb{E}\!\big[f_d^{(m)}(x_d)f_d^{(m)}(x_d')\big],
\end{align}
Finally, for each $d$ and $u,v\in\mathbb{R}$, expanding
\(
f_d^{(m)}(u)=\sum_{j=1}^{J_d} w_{d,j}^{(m)}\phi_{d,j}(u)
\)
gives
\begin{align}
\mathbb{E}\!\big[f_d^{(m)}(u)f_d^{(m)}(v)\big]
&=\mathbb{E}\!\left[\left(\sum_{j=1}^{J_d} w_{d,j}^{(m)}\phi_{d,j}(u)\right)
            \left(\sum_{s=1}^{J_d} w_{d,s}^{(m)}\phi_{d,s}(v)\right)\right] \\
&=\sum_{j,s=1}^{J_d}\mathbb{E}\!\big[w_{d,j}^{(m)}w_{d,s}^{(m)}\big]\phi_{d,j}(u)\phi_{d,s}(v)
=\sigma_w^2\sum_{j=1}^{J_d}\phi_{d,j}(u)\phi_{d,j}(v),
\end{align}
and therefore
\[
k(x,x')
=
\prod_{d=1}^D\left(\sigma_w^2\sum_{j=1}^{J_d}\phi_{d,j}(x_d)\phi_{d,j}(x_d')\right).
\]

Moreover, since $Z^{(m)}_i=U^{(m)}(x^{(i)})$, we have for $1\le i,j\le n$,
\[
\Sigma_{ij}
=\mathbb{E}\!\big[Z^{(m)}_i Z^{(m)}_j\big]
=\mathbb{E}\!\big[U^{(m)}(x^{(i)})\,U^{(m)}(x^{(j)})\big]
=k \!\big(x^{(i)},x^{(j)}\big),
\]
so the covariance matrix in the multivariate CLT is exactly the kernel matrix induced by $k$.

\medskip
\noindent\textbf{Step 4: Validity of $k$ as a covariance kernel.}
We show that $k$ is symmetric and positive semidefinite.
Symmetry is immediate since
\[
k(x,x')
=\mathbb{E}\!\big[U^{(m)}(x)\,U^{(m)}(x')\big]
=\mathbb{E}\!\big[U^{(m)}(x')\,U^{(m)}(x)\big]
=k(x',x).
\]
To prove positive semidefiniteness, fix any $n\in\mathbb{N}$, any inputs $x^{(1)},\dots,x^{(n)}\in\mathbb{R}^D$,
and any coefficients $a_1,\dots,a_n\in\mathbb{R}$. Then
\begin{align}
    \sum_{i=1}^n\sum_{j=1}^n a_i a_j\,k\!\big(x^{(i)},x^{(j)}\big)
    &=\sum_{i=1}^n\sum_{j=1}^n a_i a_j\,\mathbb{E}\!\big[U^{(m)}(x^{(i)})U^{(m)}(x^{(j)})\big] \\
    &=\mathbb{E}\!\left[\left(\sum_{i=1}^n a_i\,U^{(m)}(x^{(i)})\right)^2\right]
    \ \ge\ 0.
\end{align}
Therefore $k$ is a valid covariance kernel.

\medskip
\noindent\textbf{Conclusion.}
For any $n$ and any inputs $x^{(1)},\dots,x^{(n)}$,
\[
\big(\bar{y}^{(M)}(x^{(1)}),\dots,\bar{y}^{(M)}(x^{(n)})\big)
=
\frac{1}{\sqrt{M}}\sum_{m=1}^M Z^{(m)}
\Rightarrow
\mathcal{N}\!\big(0,\Sigma\big),
\qquad
\Sigma_{ij}=k(x^{(i)},x^{(j)}).
\]
Therefore the finite-dimensional distributions of $\bar{y}^{(M)}$ converge to those of a centered Gaussian process
$g\sim\text{GP}(0,k)$.
\qed

\subsection{Proof of Theorem \ref{thm:postpred_gp}}

\begin{proof}

Set $D:=(X,\tilde y)$ and let $T := X\cup X_* = \{t_i\}_{i=1}^r\subset \mathbb{R}^D$, where $r:=|T|$.

\medskip\noindent
\textbf{Step 1 :Likelihood depends only on finitely many evaluations and is bounded-continuous.}
For the current setting, the likelihood depends on the model only through the finite vector of evaluations
$\bar y^{(M)}(X)\in\mathbb{R}^{n}$. Up to a multiplicative constant independent of $z$, the likelihood as a function of
\[
\ell(z)\;:=\;\exp\!\Big(-\tfrac{1}{2\sigma_n^2}\|\tilde y - z\|_2^2\Big),
\qquad z\in\mathbb{R}^{n}.
\]
Thus $\ell$ is a function of $z=\bar y^{(M)}(X)$ only, it is continuous in $z$, and it is bounded since
$0<\ell(z)\le 1$ for all $z$. In particular, this verifies Assumption 1 of \cite{hron2020exact} for Proposition 1 for the Gaussian likelihood.
Moreover, since $\ell(z)>0$ for all $z$, we also have $\int \ell\, dP>0$ for any probability measure $P$ on $\mathbb{R}^{n}$

\medskip\noindent
\textbf{Step 2 :Reduce to a finite-dimensional prior:}
Define the finite-dimensional random vectors
\[
U^{(M)} := \bar y^{(M)}(T)\in\mathbb{R}^{r},
\qquad
U := g(T)\in\mathbb{R}^{r}.
\]
By Theorem~\ref{thm:prior_gp_main} applied to the finite set of inputs $T$,
\[
U^{(M)} \Rightarrow U,
\qquad
U \sim \mathcal{N}\!\big(0,\,K(T,T)\big).
\]
Where $K(T,T)$ is the $r\times r$ Gram matrix with entries $k(t_a,t_b)$.

\medskip\noindent
\textbf{Step 3 : Continuity of the posterior under weak convergence of the prior.} \\
Let $P_M:=\mathcal{L}(U^{(M)})$ and $P:=\mathcal{L}(U)$ be the prior distributions of
\[
U^{(M)}=\bar y^{(M)}(T)\in\mathbb{R}^r,
\qquad
U=g(T)\in\mathbb{R}^r.
\]
By Step~2 we have $P_M\Rightarrow P$ weakly on $\mathbb{R}^r$.
Let $\pi_X:\mathbb{R}^r\to\mathbb{R}^{|X|}$ denote the coordinate projection selecting the components
corresponding to the training inputs $X\subset T$.

By the definition of weak convergence, it suffices to show that for any bounded continuous
test function $\varphi:\mathbb{R}^r\to\mathbb{R}$,
\[
\int \varphi(u)\,P_M(du\mid D)\;\longrightarrow\;\int \varphi(u)\,P(du\mid D).
\]
By Step~1, the likelihood weight $u\mapsto \ell(\pi_X(u))$ is bounded and continuous on $\mathbb{R}^r$.
Moreover, the posterior $P_M(\cdot\mid D)$ is absolutely continuous with respect to $P_M$ with
Radon--Nikodym derivative
\[
\frac{dP_M(\cdot\mid D)}{dP_M}(u)=\frac{\ell(\pi_X(u))}{Z_M},
\qquad
Z_M:=\int \ell(\pi_X(u))\,P_M(du).
\]
Substituting this density yields
\[
\int \varphi(u)\,P_M(du\mid D)
=
\frac{1}{Z_M}\int \varphi(u)\,\ell(\pi_X(u))\,P_M(du).
\]
Since $\ell\circ\pi_X$ is bounded and continuous and $P_M\Rightarrow P$, we have
\[
Z_M=\int \ell(\pi_X(u))\,P_M(du)\;\longrightarrow\;\int \ell(\pi_X(u))\,P(du)=:Z.
\]
By Step~1 the limit $Z$ is strictly positive. Similarly, the map
$u\mapsto \varphi(u)\,\ell(\pi_X(u))$ is bounded and continuous on $\mathbb{R}^r$, hence
\[
\int \varphi(u)\,\ell(\pi_X(u))\,P_M(du)
\;\longrightarrow\;
\int \varphi(u)\,\ell(\pi_X(u))\,P(du).
\]
Then,

\[
\int \varphi(u)\,P_M(du\mid D)\;\longrightarrow\;\int \varphi(u)\,P(du\mid D),
\]
and therefore $P_M(\cdot\mid D)\Rightarrow P(\cdot\mid D)$ weakly on $\mathbb{R}^r$, that is,
\[
U^{(M)}\mid D \;\Rightarrow\; U\mid D.
\]
This is the finite-dimensional Bayes-continuity argument used in \cite{hron2020exact}.

\medskip\noindent
\textbf{Step 4 : Take the $X_*$ marginal.}
The map $\pi_*:\mathbb{R}^{r}\to\mathbb{R}^{|X_*|}$ that selects the coordinates corresponding to $X_*$
is continuous. Hence, by the continuous mapping theorem applied to the posterior convergence in Step~3,
\[
\pi_*\!\big(U^{(M)}\big)\mid D \Rightarrow \pi_*(U)\mid D,
\]
That is,
\[
\bar y^{(M)}(X_*)\mid D \Rightarrow g(X_*)\mid D,
\]
which is exactly $\bar y_*^{(M)}\mid D \Rightarrow g_*\mid D$.

\textbf{Step 5 : Identify the limit as GP regression.}
Since $g\sim\text{GP}(0,k)$, the joint vector $\big(g(X),g(X_*)\big)$ is multivariate normal, with covariance blocks determined by the kernel $k$.
Together with the Gaussian observation model $\tilde y\mid g(X)\sim\mathcal{N}\!\big(g(X),\sigma_n^2 I_n\big)$, it follows that the conditional
distribution of the latent test values $g(X_*)$ given $D=(X,\tilde y)$ is exactly the standard Gaussian-process regression posterior distribution at $X_*$
\cite{rasmussen2006gpml}.
\end{proof}

\section{Model hyperparameters for Section \ref{subsec:regression_with_UQ}}
\label{appendix:model_hyperparameters}

\subsection{GP, BNN, and B-INN in 1D input space}

Model hyperparameters used to draw Figure \ref{fig:1D_wo_gap_predictions_comparison} are provided below:

\begin{itemize}
\item GP: Observation noise variance is 0.04, signal variance is 1.0, and kernel length scale is 0.5. In standard GP, the hyperparameters can be optimized by minimizing the negative log marginal likelihood. However, we kept using these hyperparameters to make a fair comparison with BNN.

\item B-INN: Observation noise variance is 0.04, signal variance is 1.0, and kernel length scale is 0.5. The number of basis functions was varied: Figure \ref{fig:1D_wo_gap_predictions_comparison}(b) had 60 basis functions centered at training data, while Figure \ref{fig:1D_wo_gap_predictions_comparison}(c) had 20 equispaced basis functions.

\item BNN + Metropolis Hastings: We sampled a total of 6,000 cases, with the first 1,000 being burned out. Step size is 0.01.

\item BNN + Hamiltonian Monte Carlo: We sampled a total of 6,000 cases, with the first 1,000 being burned out. Step size is 0.01. The number of leapfrog steps per iteration is 20.

\item BNN + Variational Inference: We ran 2,000 epochs with a learning rate of 0.01. 

\end{itemize}

\subsection{Open Source AeroDynamics Dataset-BlendedNet}
\begin{itemize}


\item B-INN: Observation noise variance $\sigma_n^2=0.0225$ and signal variance $\sigma_w^2=1.0$ for all models.
We used early stopping with patience 30. The number of modes was set to $M=2$.
The number of basis functions was varied per dimension with center counts
$[25,25,16,10,10,10,12]$.
For each output dimension, we additionally varied the per-dimension kernel length scales:
\[
\begin{aligned}
C_p    &:\ [1.13,\ 1.13,\ 1.50,\ 2.0,\ 2.0,\ 2.0,\ 1.50],\\
C_{f_x}&:\ [1.27,\ 1.27,\ 1.35,\ 2.0,\ 2.0,\ 2.0,\ 1.50],\\
C_{f_y}&:\ [1.169,\ 1.169,\ 1.50,\ 2.0,\ 2.0,\ 2.0,\ 1.50],\\
C_{f_z}&:\ [1.20,\ 1.20,\ 1.35,\ 2.0,\ 2.0,\ 2.0,\ 1.50].
\end{aligned}
\]

\item BNN + Hamiltonian Monte Carlo: We used the No-U-Turn Sampler (NUTS), an adaptive variant of HMC, with 250 warmup steps, 250 post warm up samples, and 0.95 target acceptance probability.

\item BNN + Variational Inference: We ran 3,000 epochs with a learning rate of 0.01. 

\end{itemize}

\section{BlendedNet data preparation}
\label{appendix:blendednet}

The data source of BlendedNet can be found in \cite{sung2025blendednet}. The original data has 19 inputs (three x,y,z coordinates on the airplane surface, three surface normal vectors, nine geometric parameters, and four flight conditions) and 4 outputs (aerodynamic coefficients - Cp, Cfx, Cfy, and Cfz). 

The original dataset is practically too large to train with BNNs. Therefore, we reduce the data size and dimensionality. The reduced dataset has 7 inputs (three for x,y,z coordinates on a plane surface and four flight conditions - altitude, Reynolds number, Mach number, and angle of attack) and 4 outputs (aerodynamic coefficients - Cp, Cfx, Cfy, Cfz). Since we fixed the geometric parameters, the case\_130.vtk to case\_139.vtk files that share the same geometric parameters (denoted as geom\_13) were extracted. Among them, the case\_130.vtk simulaiton data was used for the test set.

\section{Active learning algorithm}
\label{appendix:active_learning_algorithm}

The active learning procedure operates on a candidate parameter pool $\mathcal{C}$ and proceeds for $T$ rounds.
An initial labeled set $\mathcal{L}_0 \subset \mathcal{C}$ and a validation set $\mathcal{V} \subset \mathcal{C}$ are first sampled such that $\mathcal{L}_0 \cap \mathcal{V}=\emptyset$. We then update the remaining candidate pool by
$\mathcal{C} \leftarrow \mathcal{C}\setminus(\mathcal{L}_0 \cup \mathcal{V}),$ and train an initial B-INN surrogate $f_0$ on $\mathcal{L}_0$.

At each round $t \geq 1$, the predictive uncertainty of the current model $f_{t-1}$ is evaluated for every remaining candidate $\theta \in \mathcal{C}$ over a fixed set of spatial acquisition points $\mathcal{X}_{\mathrm{acq}}$. 
Specifically, the pointwise predictive standard deviation $\sigma_{\theta}(x)$ is computed as
\[
\sigma_{\theta}(x) = f_{t-1}.\textsc{PredictStd}(\theta, x), \quad x \in \mathcal{X}_{\mathrm{acq}}.
\]
An acquisition score is then defined by averaging the uncertainty over the spatial domain,
\[
s(\theta) = \frac{1}{|\mathcal{X}_{\mathrm{acq}}|} \sum_{x \in \mathcal{X}_{\mathrm{acq}}} \sigma_{\theta}(x).
\]
The next query point is selected by maximizing this score,
\[
\theta_{\mathrm{new}} = \arg\max_{\theta \in \mathcal{C}} s(\theta),
\]
after which $\theta_{\mathrm{new}}$ is removed from the candidate pool and high-fidelity labeled data $\mathcal{D}_{\mathrm{new}}(\theta_{\mathrm{new}})$ are obtained.

The labeled set is updated according to
\[
\mathcal{L}_t = \mathcal{L}_{t-1} \cup \mathcal{D}_{\mathrm{new}}(\theta_{\mathrm{new}}),
\]
and a new surrogate model $f_t$ is trained using a warm-start initialization based on the posterior mean of $f_{t-1}$. 
Model performance is monitored using the validation error
\[
\mathrm{RMSE}_t = \textsc{RMSE}(f_t, \mathcal{V}).
\]
This iterative process continues until $T$ rounds are completed or the candidate pool $\mathcal{C}$ is exhausted, producing a sequence of increasingly accurate and uncertainty-aware surrogate models $\{f_t\}_{t=0}^T$.

\begin{algorithm}[H]
\caption{Active Learning algorithm for B-INN}
\label{alg:active-learning}
\small
\begin{algorithmic}[1]
\REQUIRE Candidate pool $\mathcal{C}$
\REQUIRE Active Learning rounds $T$
\REQUIRE Spatial points $\mathcal{X}_{\text{acq}}$ (scoring)
\ENSURE Trained models $\{f_t\}_{t=0}^{T}$

\STATE Sample initial training set $\mathcal{L}_0$ and validation set $\mathcal{V}$ from $\mathcal{C}$
\STATE $\mathcal{C} \leftarrow \mathcal{C}\setminus (\mathcal{L}_0 \cup \mathcal{V})$
\STATE Train initial model: $f_0 \leftarrow \texttt{Train}(\mathcal{L}_0)$

\FOR{$t=1$ \textbf{to} $T$}
    \IF{$|\mathcal{C}| = 0$}
        \STATE \textbf{break}
    \ENDIF

    \FORALL{$\theta \in \mathcal{C}$}
        \STATE $\sigma_\theta \leftarrow f_{t-1}.\texttt{predict\_std}(\theta,\mathcal{X}_{\text{acq}})$
        \STATE $s(\theta) \leftarrow \frac{1}{|\mathcal{X}_{\text{acq}}|}
        \sum_{x\in\mathcal{X}_{\text{acq}}} \sigma_\theta(x)$
    \ENDFOR

    \STATE $\theta_{\text{new}} \leftarrow \operatorname*{arg\,max}_{\theta \in \mathcal{C}} s(\theta)$
    \STATE $\mathcal{C} \leftarrow \mathcal{C}\setminus \{\theta_{\text{new}}\}$

    \STATE Sample labeled data at $\theta_{\text{new}}$ and denote it by $\mathcal{D}_{\text{new}}(\theta_{\text{new}})$
    \\  $\mathcal{D}_{\text{new}}(\theta)$ contains labeled samples (e.g., spatial points and targets) at parameter $\theta$.
    \STATE $\mathcal{L}_t \leftarrow \mathcal{L}_{t-1} \cup \mathcal{D}_{\text{new}}(\theta_{\text{new}})$

    \STATE Warm-start: set prior mean from posterior mean of $f_{t-1}$
    \STATE Retrain model: $f_t \leftarrow \texttt{Train}(\mathcal{L}_t)$
    \STATE Compute validation error: $\mathrm{RMSE}_t \leftarrow \texttt{RMSE}(f_t,\mathcal{V})$
\ENDFOR

\STATE \textbf{return} $\{f_t\}_{t=0}^{T}$
\end{algorithmic}
\end{algorithm}

\section{Governing equations for active learning}
\label{appendix:governing_equations_for_active_learning}

\subsection{Poisson equation in 3D space and 1D parameter input space}

Consider a parametric Poisson equation on the unit cube \(\Omega=[0,1]^3\) with homogeneous Dirichlet boundary conditions. For each parameter
\(p\in[0,1]\), we define \(u(\cdot;p)\) as the solution of
\begin{equation}
    \label{eq:poisson_fixed_pde}
    \begin{cases}
        -\Delta u(\boldsymbol{x};p) = f(\boldsymbol{x};p),
        & \boldsymbol{x}\in \Omega,\\[3pt]
        u(\boldsymbol{x};p)=0,
        & \boldsymbol{x}\in \partial\Omega,
    \end{cases}
\end{equation}
where \(\boldsymbol{x}=(x_1,x_2,x_3)\) and \(\Delta\) denotes the Laplacian with respect to \(\boldsymbol{x}\). 

The forcing is a fixed three-mode separable function
\begin{equation}
    \label{eq:poisson_fixed_forcing}
    f(\boldsymbol{x};p)
    =
    \sum_{r=1}^{3} c_r\, p \prod_{j=1}^{3}\sin\!\big(\pi m^{(r)}_j x_j\big),
\end{equation}
with mode vectors and coefficients
\[
    \begin{aligned}
        \boldsymbol{m}^{(1)}&=(2,3,2), &
        \boldsymbol{m}^{(2)}&=(4,1,3),\\
        \boldsymbol{m}^{(3)}&=(5,5,2), &
        (c_1,c_2,c_3)&=(1.0,\ 0.8,\ 1.2).
    \end{aligned}
\]

The closed-form solution to this problem is:
\begin{equation}
\label{eq:poisson_fixed_solution}
u(\boldsymbol{x};p)
=
\sum_{r=1}^{3}
c_r\,
\frac{p}{\pi^2\sum_{j=1}^{3}\big(m^{(r)}_j\big)^2}
\prod_{j=1}^{3}\sin\!\big(\pi m^{(r)}_j x_j\big).
\end{equation}

This yields a fully analytic mapping \((\boldsymbol{x},p)\mapsto u(\boldsymbol{x};p)\), which we use to generate data.

\subsection{Heat equation in 2D space, time, and 2D parameter input space}

The governing parametric PDE of this heat equation can be written as:
\begin{equation}
    \label{eq:heat_pde}
    \left\{
    \begin{array}{l}
        \dot{u}(\boldsymbol{x},t) + k\Delta u(\boldsymbol{x},t) = b(\boldsymbol{x},P)
        \quad \text{in } \Omega_x \otimes \Omega_t,\\[2pt]
        u(\boldsymbol{x},t)\big|_{\partial\Omega_x} = 0,\\[2pt]
        u(\boldsymbol{x},0) = 0,
    \end{array}
    \right.
\end{equation}
defined in a spatial domain $\Omega_x=[0,1]^2$, a temporal domain $\Omega_t=(0,0.04]$, and parametric domains
$\Omega_k=[1,4]$ and $\Omega_P=[100,200]$.

following~\cite{guo2025interpolating}.
The source term is modeled as a sum of Gaussian heat sources
\begin{equation}
\label{eq:heat_rhs}
b(\mathbf{x},P)=\sum_{i=1}^{n_s} P\exp\!\left(-\frac{2\big((x-x_i)^2+(y-y_i)^2\big)}{r_0^2}\right),
\end{equation}
where $r_0=0.05$ is the standard deviation that characterizes the width of the source function and $n_s=16$ source centers $\{(x_i,y_i)\}_{i=1}^{n_s}$ are fixed.
Simulation data are generated by running a finite element analysis (FEA) for different parameter pairs $(k,P)$, producing training
examples of the form $(x,y,t,k,P)\mapsto u(x,y,t;k,P)$.

\section{Active learning and model hyperparameters for Section \ref{subsec:active_learning_with_B-INN}}
\label{appendix:active_learning_and_model_hyperparameters}

\subsection{Poisson equation}

B-INN and BNN Models are initialized with 8 training parameter pairs, then acquire 1 new parameter per round (20 rounds total).

\begin{itemize}
    \item B-INN:Observation noise variance is 0.001, signal variance is 1.0 for all models. The number of Mode was set as 10. The center counts and length scale varied for the parametric dimension. The spatial dimensions had 16 basis functions with a length scale of 1.2, and the parametric dimension had 6 basis function with a length scale of 1.0. We trained the model for 40 iterations.

    \item BNN + Hamiltonian Monte Carlo: We again used the No-U-Turn Sampler (NUTS) with 600 warm up steps, 600 post warm up samples, and 0.85 target acceptance probability. For each Active learning round, an additional 250 warm up steps and 250 post warm up steps were taken.
    
    \item BNN + Variational Inference: We ran 15,000 epochs with a learning rate of 0.001. Every active learning round, we ran the model for an additional 4000 epochs.
\end{itemize}

\subsection{Heat equation}

B-INN and BNN Models are initialized with 35 training parameter pairs, then acquire 1 new parameter per round (20 rounds total).

\begin{itemize}
    \item B-INN:Observation noise variance is 0.001, signal variance is 1.0 for all models. The number of Mode was set as 15. For each input dimension, the center counts was $[16, 16, 16, 12, 12]$ with length scales $[2, 2, 2, 0.9, 8.36]$ 40 iterations were ran every iteration, including initial training.

    \item BNN + Hamiltonian Monte Carlo: We again used the No-U-Turn Sampler (NUTS) with 200 warm up steps, 150 post warm up samples, and 0.85 target acceptance probability. For each Active learning round, an additional 120 warm up steps and 80 post warm up steps were taken.
    
    \item BNN + Variational Inference: We ran 20,000 epochs with a learning rate of 0.001. Every active learning round, we ran the model for an additional 5000 epochs.
\end{itemize}

\end{document}